\documentclass{amsart}
\usepackage{amsmath}
\usepackage{amsfonts}
\usepackage{amssymb}
\usepackage{amsthm}
\usepackage{enumerate}
\usepackage{mathtools}
\usepackage{footnote}

\newtheorem{theorem}{Theorem}

\newtheorem{remark}[theorem]{Remark}
\newtheorem{corollary}[theorem]{Corollary}

\newtheorem{conjecture}{Conjecture}
\newtheorem{example}[theorem]{Example}

\newcommand{\refT}[1]{Theorem~\ref{#1}}
\newcommand{\refC}[1]{Corollary~\ref{#1}}

\newcommand{\refR}[1]{Remark~\ref{#1}}

\newcommand{\refCon}[1]{Conjecture~\ref{#1}}
\newcommand{\refEx}[1]{Example~\ref{#1}}

\newcommand{\cF}{\mathcal{F}}
\newcommand{\cP}{\mathbb{P}}
\newcommand{\cE}{\mathbb{E}}

\title{Short rainbow cycles for families of small edge sets}
\author{He Guo}\thanks{Department of Mathematics and Mathematical Statistics, Ume\r{a} University, Ume\r{a} 90187, Sweden.  E-mail: {\tt he.guo@umu.se}. Research supported by the Kempe Foundation grant JCSMK23-0055.}
\date{June 2025}

\begin{document}
\begin{abstract}
   In 2019, Aharoni proposed a conjecture generalizing the Caceetta-H\"aggkvist conjecture: if an $n$-vertex graph~$G$ admits an edge coloring (not necessarily proper) with~$n$ colors such that each color class has size at least~$r$, then~$G$ contains a rainbow cycle of length at most $\lceil n/r\rceil$. Recent works~\cite{AG2023,ABCGZ2023,G2025} have shown that if a constant fraction of the color classes are non-star, then the rainbow girth is $O(\log n)$. In this note, we extend these results, and we show that even a small fraction of non-star color classes suffices to ensure logarithmic rainbow girth. We also prove that the logarithmic bound is of the right order of magnitude. Moreover, we determine the threshold fraction between the types of color classes at which the rainbow girth transitions from linear to logarithmic.
\end{abstract}
\maketitle

\section{Introduction}
The following is a well-known conjecture in graph theory by Caccetta and H\"{a}ggkvist~\cite{CaccettaHaggkvist} from 1978.

\begin{conjecture}[\cite{CaccettaHaggkvist}]\label{conj:CH}
    In an $n$-vertex digraph with minimum out-degree~$r$, there exists a directed cycle of length at most $\lceil \frac{n}{r}\rceil$.
\end{conjecture}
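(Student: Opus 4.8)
\medskip
\noindent\emph{Proof proposal.} The statement is the Caccetta--H\"aggkvist conjecture, which is open in general, so what follows is the standard line of attack together with the point at which it currently stalls; the body of the paper does not try to settle it but instead studies the rainbow/coloured analogue due to Aharoni.

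\smallskip
The first step would be a chain of reductions. One may assume the minimum out-degree is \emph{exactly} $r$ (delete surplus out-arcs), then reduce to the out-regular case of out-degree $r$ by a splitting argument, and finally, perturbing $n$ negligibly, assume $r \mid n$, so that the goal becomes a directed cycle of length at most $n/r$. It then suffices to fix an integer $k \geq 2$, take $r = n/k$, and produce a directed cycle of length at most $k$; the case $k = 2$ is immediate.

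\smallskip
The crux is $k = 3$: an $n$-vertex digraph with minimum out-degree $n/3$ must contain a directed triangle. Here the plan is a double-counting argument on directed paths of length two (``cherries'' $u \to v \to w$): absence of a directed triangle forces the arc $w \to u$ to be missing for every such cherry, so one compares the number of cherries, essentially $\sum_v d^+(v)\,d^-(v)$, with the number of non-arcs and with $\sum_v \binom{d^+(v)}{2}$, invoking convexity of the degree sequence together with the identity $\sum_v d^+(v) = |E|$. This forces some out-degree below $cn$ for an explicit $c$, and the history of the problem is precisely the sequence of improvements of $c$ — through work of Chv\'atal--Szemer\'edi, Shen, Hamburger--Haxell--Kostochka, and others, down to the current record $c \approx 0.3465$ of Hladk\'y, Kr\'al and Norin via flag algebras and semidefinite programming — none of which reaches $1/3$. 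For general $r$, Chv\'atal--Szemer\'edi and then Shen already produce a directed cycle of length at most $n/r + O(1)$, so only an additive constant separates the known results from the conjecture.

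\smallskip
\emph{The main obstacle} is exactly this residual gap, concentrated in the triangle case: the density/counting method appears to lose a multiplicative constant that flag-algebra optimization has narrowed but not eliminated, and no spectral, entropy, or probabilistic-deletion variant has closed it either. A genuine proof would, I expect, require a stability analysis pinning down the near-extremal configurations (iterated blow-ups of a directed triangle, and the Caccetta--H\"aggkvist near-extremal families) rather than a sharper form of the existing inequalities.
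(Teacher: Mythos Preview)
Your assessment is correct: the paper does not attempt to prove this statement. It is introduced as the Caccetta--H\"aggkvist conjecture, immediately followed by the sentence ``The conjecture is still open,'' and the paper then moves on to Aharoni's rainbow generalisation (Conjecture~\ref{conj:A}) and to the logarithmic rainbow-girth results that form its actual content. So there is no proof in the paper to compare your proposal against.

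Your survey of the standard reductions and of the triangle-case attack (cherry counting, the sequence of constants through Chv\'atal--Szemer\'edi, Shen, Hamburger--Haxell--Kostochka, and the flag-algebra bound of Hladk\'y--Kr\'al'--Norin) is accurate and appropriate context, though strictly speaking none of it is needed here: the statement is a \texttt{conjecture} environment, not a \texttt{theorem}, and the paper cites it only as motivation. Nothing in your write-up is wrong, but nothing in it is expected either --- the paper simply leaves Conjecture~\ref{conj:CH} open and proceeds to its own results.
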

The conjecture is still open. For a history of the studying of this problem, see, e.g., the references in~\cite{SHEN2000167, shen2002caccetta, sullivan2006summary}.

Given a graph~$G$ and an edge coloring (not necessarily proper) with~$m$ colors $\lambda:E(G)\rightarrow [m]$, for each $i\in [m]$ the edge set~$\lambda^{-1}(i)$ is called a \emph{color class}, and a subgraph~$H$ of~$G$ is \emph{rainbow} if no two edges of~$H$ belong to the same color class. The \emph{rainbow girth} of~$G$ is the minimum length of a rainbow cycle in~$G$, which is defined to be~$\infty$ if there is no rainbow cycle.
In~\cite{aharoniconjecture}, a generalization of the Caccetta-H\"{a}ggkvist conjecture is raised by Aharoni.
\begin{conjecture}[\cite{aharoniconjecture}]\label{conj:A}
    For an $n$-vertex graph~$G$ and an edge coloring of~$G$ with~$n$ colors, if each color class is of size at least~$r$, then the rainbow girth at most $\lceil\frac{n}{r}\rceil$.
\end{conjecture}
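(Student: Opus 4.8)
Conjecture~\ref{conj:A} was proposed by Aharoni precisely as a strengthening of Conjecture~\ref{conj:CH}, and the Caccetta--H\"aggkvist conjecture has resisted proof for over four decades; so I do not expect to establish Conjecture~\ref{conj:A} in full, and the plan is instead (i) to make the reduction to Conjecture~\ref{conj:CH} completely explicit, so as to locate exactly where the difficulty lies, and (ii) to identify the regimes in which a proof, or a quantitatively weaker statement, is realistically attainable — which is the range in which the present note operates.

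The first step is the reduction. Let $D$ be an $n$-vertex digraph with minimum out-degree $r$; deleting out-arcs we may assume every out-degree equals $r$. Let $G$ be the underlying (multi)graph on $V(D)$, and color each edge of $G$ by the tail of the corresponding arc of $D$; this is an edge coloring with at most $n$ colors — exactly $n$ after a harmless normalization — and every color class has size exactly $r$. If $C=v_0v_1\cdots v_{k-1}v_0$ is a rainbow cycle of $G$, then its $k$ edges carry $k$ distinct colors, all lying in the $k$-element set $\{v_0,\dots,v_{k-1}\}$; hence each $v_i$ is the tail of exactly one edge of $C$, so $C$ is a directed cycle of $D$. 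Therefore a rainbow cycle of length at most $\lceil n/r\rceil$ is the same thing as a directed cycle of length at most $\lceil n/r\rceil$, and Conjecture~\ref{conj:A} implies Conjecture~\ref{conj:CH}. (The length-$2$ corner case, corresponding to digons, is why it is convenient to work with multigraphs; for simple graphs one handles $r>n/2$, where the target bound degenerates to $1$ or $2$, separately.) In particular even the case $r=3$ of Conjecture~\ref{conj:A} already entails an open case of Conjecture~\ref{conj:CH}, so a complete proof is beyond current reach.

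Given this barrier, the realistic targets are the following. The case $r=1$ is immediate (a system of representatives of the $n$ color classes gives $n$ rainbow edges on $n$ vertices, which must contain a cycle, necessarily rainbow); the case $r=2$, where the conjectured bound $\lceil n/2\rceil$ is already nontrivial, has been studied separately; the large-$r$ regime $r=\Omega(n)$ asks only for a rainbow cycle of bounded length; and, most relevantly here, the logarithmic relaxation: giving up the $O(n/r)$ bound and instead forcing rainbow girth $O(\log n)$ under a structural hypothesis on the coloring, e.g.\ that a (small) fraction of the color classes are non-star, which is what the present note does, extending~\cite{AG2023,ABCGZ2023,G2025}. For that last goal the workhorse is the topological-connectivity / matroid-intersection machinery (à la Aharoni--Berger--Ziv and its recent refinements): a short rainbow cycle is encoded as an independent common transversal of the color classes and a suitable matroid (a truncated graphic matroid, or a union of a few copies thereof), and one lower-bounds the connectivity of the associated independence complex so that a common independent transversal must exist. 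The main obstacle throughout is exactly the Caccetta--H\"aggkvist phenomenon exposed by the reduction above: any argument delivering the full $\lceil n/r\rceil$ bound for all $r$ would in particular settle a notorious open problem, so one is forced either to accept a logarithmic rather than linear bound, or to impose extra structure on the coloring — and the contribution of this note is to pin down, via a matching lower-bound construction, how small that structural input can be made and the threshold at which the rainbow girth transitions from linear to logarithmic.
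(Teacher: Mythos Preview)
You are right that \refCon{conj:A} is stated in the paper as an open conjecture and is not proved there; your reduction showing that it implies \refCon{conj:CH} is correct and is exactly the argument the paper alludes to (deferring details to~\cite{AG2023}), so on the level of ``what can be said about this statement'' your proposal is in agreement with the paper.

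Where your write-up diverges from the paper is in the description of the machinery behind the logarithmic-girth results. You attribute them to ``topological-connectivity / matroid-intersection machinery (\`a la Aharoni--Berger--Ziv)'' and to encoding a short rainbow cycle as an independent common transversal. That is not what the paper does. The actual method is elementary and probabilistic: one takes a $p$-random vertex subset $S$, keeps from each color class one edge lying inside $S$ (if any), computes by inclusion--exclusion the expected number of surviving edges versus $|S|$, and optimizes $p$ so that the expected \emph{excess} $|E|-|V|$ of the resulting rainbow subgraph is $\Omega(n)$; the Bollob\'as--Szemer\'edi girth bound (\refT{thm:BS}) then yields a cycle of length $O(\log n)$. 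No independence complexes or matroid intersection enter. This matters because the excess-via-random-subset argument is precisely what makes the threshold $2\alpha+\beta>1$ in \refT{thm:main} transparent (it is the condition under which the expected excess can be made positive), and it is also what drives the quantitative form of \refT{thm:nonstarex}; the topological approach you sketch would not obviously recover these features.
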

See~\cite{AG2023} for a proof why~\refCon{conj:A} implies~\refCon{conj:CH}. And if~\refCon{conj:A} is true, then the bound is tight: Consider the $n$-vertex graph on~$\{x_1,\dots, x_n\}$ and each color class has the form $\{x_ix_{i+1},\dots, x_ix_{i+r}\}$ for $1\le i\le n$, where the indices are taken modulo~$n$.

Regarding~\refCon{conj:A}, DeVos et al.~\cite{DeVos} proved that the conjecture is true when~$r=2$. For the case that each color class is of size one or two, a more explicit bound on the rainbow girth is given in~\cite{ABCGZ2023}. 
\begin{theorem}[\cite{ABCGZ2023}]
    For an $n$-vertex graph and an edge coloring with~$n$ colors, if~$p$ color classes are of size 1 and~$n-p$ color classes are of size 2, then the rainbow girth is at most $\lceil \frac{n+p}{2}\rceil$.
\end{theorem}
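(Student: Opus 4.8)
The plan is to argue by induction on $n$, reducing to an irreducible configuration and then analyzing it via a shortest rainbow cycle. Write $d:=n-p$ for the number of size-$2$ classes, so that $|E(G)|=n+d$ and the claimed bound $\lceil (n+p)/2\rceil$ equals $n-\lfloor d/2\rfloor$. Small $n$ are handled by hand, and we may assume $G$ is connected, the disconnected case following by a routine reduction.

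\emph{Reductions.} First I would remove vertices of low degree, using repeatedly that deleting edges never decreases the rainbow girth. If $G$ has an isolated vertex $v$, delete $v$ together with one whole color class — a size-$1$ class if $p\ge 1$, otherwise a size-$2$ class; the new instance has $n-1$ vertices, $n-1$ colors, target bound at most $n-\lfloor d/2\rfloor$, and rainbow girth no smaller than that of $G$, so the induction hypothesis applies. If $\deg(v)=1$ with incident edge $e$: if $e$'s class is a singleton, delete $v$ and that class; if $e$ lies in a size-$2$ class $\{e,f\}$, delete $v,e,f$. Either way the new instance has $n-1$ vertices, $n-1$ colors, the correct number of singleton classes, target bound $\le n-\lfloor d/2\rfloor$, and the same or larger rainbow girth, since a leaf lies on no cycle. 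Finally, if $\deg(v)=2$ and the two edges at $v$ form one size-$2$ class, delete $v$ and that class — this again does not change the rainbow girth, since a cycle through $v$ would repeat a color. After these steps we may assume $\delta(G)\ge 2$ and that every degree-$2$ vertex has its two edges in distinct classes.

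\emph{Core argument.} Suppose the rainbow girth exceeds $n-\lfloor d/2\rfloor$ (if $d\le 1$ this bound is $n$ and there is nothing to prove, so $d\ge 2$). A maximal rainbow forest has at most $n-1$ edges, hence misses a color, and an edge of a missed color closes a rainbow fundamental cycle; so a rainbow cycle exists. Let $C$ be a shortest one, of length $r>n-\lfloor d/2\rfloor$. A standard exchange argument shows $C$ is chordless: a chord $e$ would split $C$ into paths $P_1,P_2$ of length $\ge 2$; each cycle $P_i+e$ is shorter than $C$, hence non-rainbow, forcing the color of $e$ onto $P_i$; but $P_1,P_2$ use disjoint sets of colors, a contradiction. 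Put $W:=V(G)\setminus V(C)$, so $|W|=n-r<\lfloor d/2\rfloor$. Since $C$ is chordless and rainbow, every edge not on $C$ has an endpoint in $W$ and either belongs to one of the $n-r$ colors not used by $C$ or is the second edge of a size-$2$ class of $C$. Combined with $\delta(G)\ge 2$, this forces a large number of edges onto the few vertices of $W$. The heart of the proof is to convert this into a contradiction: any short path through $W$ between two vertices of $C$ can be completed along the shorter arc of $C$ to a cycle of length $<r$, which is therefore non-rainbow; tracking, class by class, which colors of $C$ are forced to reappear on such ``ears'' yields $|W|\ge\lfloor d/2\rfloor$, contradicting $|W|<\lfloor d/2\rfloor$.

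The main obstacle is exactly this last step. Two points need care: an ear through $W$ is only automatically shorter than $C$ when the arc of $C$ it uses is not too long, so one must show short ears suffice or else reroute through a different stretch of $C$; and one must account for the overlap between the ``missing'' colors and the second edges of $C$'s doubleton classes, so that the count of edges landing on $W$ translates cleanly into the bound $|W|\ge\lfloor d/2\rfloor$. Getting this bookkeeping exactly right — rather than off by a constant — is what pins down the precise value $\lceil (n+p)/2\rceil$.
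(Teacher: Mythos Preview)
This theorem is not proved in the present paper; it is quoted from \cite{ABCGZ2023} and only stated here as background. So there is no ``paper's own proof'' to compare against, and your proposal would have to stand on its own.

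As it stands, it does not. Your reductions to $\delta(G)\ge 2$ are fine, and the chordlessness of a shortest rainbow cycle is correct. But the decisive step --- turning ``few vertices outside $C$'' into the inequality $|W|\ge\lfloor d/2\rfloor$ --- is exactly the part you leave as a plan, and you say so yourself. Two concrete difficulties you flag but do not resolve: first, an ear through $W$ of length $k$ together with the shorter arc of $C$ has length at most $k+\lfloor r/2\rfloor$, and this is $<r$ only when $k<\lceil r/2\rceil$; with $|W|$ as large as $\lfloor d/2\rfloor-1$ this need not hold across the whole range of $d$, so ``short ears suffice'' is an assertion, not an argument. Second, the bookkeeping that converts ``every off-$C$ edge meets $W$ and is either a missing color or a partner of a $C$-edge'' into the exact bound $|W|\ge\lfloor d/2\rfloor$ is the entire content of the theorem, and you have not supplied it. Until that counting is actually carried out --- with the ceiling/floor landing on $\lceil(n+p)/2\rceil$ rather than something weaker --- this is an outline, not a proof.
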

The~$r=3$ case of~\refCon{conj:A} is still open, while Clinch et al.~\cite{Clinch2024} proved that for $r=3$ the rainbow girth is at most~$\frac{4n}{9}+7$. For general~$r$, Hompe and Spirkl~\cite{HS} proved that there exists a constant~$0<C\le 10^{11}$ such that the rainbow girth is at most~$C\frac{n}{r}$. Recently, Hompe and Huynh~\cite{HOMPE202580} proved a strengthening result that the rainbow girth is at most $\frac{n}{r}+\alpha_r$ for some positive constant~$\alpha_r$ depending on~$r$.

As the bound on the rainbow girth in~\refCon{conj:A} is linear in~$n$ and in all the known extremal examples the color classes are stars, another direction is to study when the rainbow girth is small, say, logarithmic in~$n$. 
\begin{theorem}[\cite{AG2023}]
There exists~$C>0$ such that for any $n$-vertex graph and edge coloring with~$n$ colors, if each color class contains a matching of size two, then the rainbow girth is at most $C\log n$.
\end{theorem}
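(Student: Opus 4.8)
The plan is to establish the logarithmic bound by a breadth-first expansion argument, in which the matching-of-size-two hypothesis enters precisely to keep the search from getting stuck; the linear-rainbow-girth extremal examples (stars) are exactly the configurations this hypothesis forbids.

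First I would reduce to a clean sub-configuration. Since it is enough to produce \emph{some} rainbow cycle, I may keep two disjoint edges from each color class and discard everything else, so that $G$ has exactly $2n$ edges, each of the $n$ colors occurs on exactly two edges, and those two edges are vertex-disjoint. Deleting a vertex of degree at most $2$ removes at most $2$ edges, so this process cannot delete all $n$ vertices of a graph with $2n$ edges (the last vertex deleted would have to have had degree $2$ but has degree $0$); hence I may pass to a nonempty subgraph $G'$ with $\delta(G')\ge 3$, and for $n$ below an absolute constant the asserted bound holds vacuously for suitable $C$. The one structural fact I will use repeatedly is that at any vertex $u$ of $G'$, every edge at $u$ whose color $c$ already occurs at $u$ is the only edge of color $c$ at $u$, and the partner edge of color $c$ is disjoint from $u$.

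Next I would run BFS from an arbitrary root $r$, growing a tree $T$ under the invariant that every root-to-vertex path of $T$ is rainbow, and show that one of two things happens within $O(\log n)$ rounds. When processing a frontier vertex $u$ at depth $d$, with $C_u$ the set of (exactly $d$) colors on its root path, attach as a child of $u$ each $w\notin T$ with $uw\in E(G')$ and $\lambda(uw)\notin C_u$. If for enough frontier vertices there are at least two such attachable edges, then the usual Moore-type counting shows $|T|$ grows geometrically, so after $O(\log n)$ rounds $T$ exhausts $V(G')$ and a non-tree edge between two vertices of small BFS-distance yields a rainbow cycle of length $O(\log n)$ (the color check is immediate since the two root paths are short). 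Otherwise, at some shallow frontier vertex $u$ nearly all of its $\ge 3$ incident edges are blocked: such an edge either goes back into $T$ --- giving a cycle through the tree of length at most twice the depth --- or has a color $c\in C_u$, whose partner edge then sits on $u$'s root path strictly above $u$. The task is to select, among these blocked edges, one (or to average over them) so that the partner occurrences of all colors used on the resulting closed walk lie outside the walk, making it rainbow; its length is $O(\log n)$.

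The main obstacle is exactly this last extraction step: turning a short closed structure produced by the expansion failure into a genuinely \emph{rainbow} short cycle. This is where the hypothesis must be used in full: for a star color class one high-degree vertex could be simultaneously blocked by many colors whose partner occurrences all fall inside the relevant arcs, which is precisely the phenomenon underlying the $\lceil n/r\rceil$ extremal examples; the vertex-disjointness forced by a matching of size two is what prevents this accumulation. A convenient bookkeeping device for the same argument is to pass from the bare BFS tree to the auxiliary digraph on the ``flags'' $(e,v)$ (an edge together with one of its endpoints) of $G'$: each flag has out-degree at least $2$ (from the $\ge 2$ other edges at $v$), so the directed Moore bound gives a non-backtracking closed walk of length $O(\log n)$, and then the multiplicity bound (each color twice) together with disjointness is used to prune it to a rainbow cycle. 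In either formulation the crux, and the only place the hypothesis is needed, is the disjointness of equally-colored edges.
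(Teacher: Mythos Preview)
Your approach is genuinely different from the one the paper uses (seen in the proof of \refT{thm:mainstronger}, of which this statement is the special case $\alpha=1$, $\beta=0$). There the argument is: take a $p$-random vertex subset $S$; for each color class keep one edge of the matching if it lies inside $S$; the resulting subgraph $H$ is rainbow \emph{by construction}, and a short calculation gives $\cE\big(|E(H)|-|V(H)|\big)\ge n\big((2p^2-p^4)-p\big)>0$ for $p$ slightly below $1$. Thus some instance of $H$ has excess $\Omega(n)$, and Bollob\'as--Szemer\'edi (\refT{thm:BS}) gives a cycle of length $O(\log n)$, automatically rainbow. No extraction step is ever needed.

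Your outline, by contrast, leaves exactly that extraction step open, and you flag it yourself as ``the main obstacle''. The difficulty is real. In your flag-digraph formulation, the Moore bound yields a non-backtracking closed walk of length $\ell=O(\log n)$, but each of the $\ell$ edges may share its color with another edge of the walk; disjointness of equi-colored edges only tells you such a pair is non-adjacent, not that you can excise it while keeping a closed walk, let alone a cycle. A walk with $\ell$ edges using $\ell/2$ colors twice each need not contain any rainbow cycle of length $O(\ell)$ without a further structural argument you have not supplied. The BFS version has the same problem and an earlier one besides: at depth $d$ the path to a frontier vertex $u$ carries $d$ colors, and since all edges at $u$ have distinct colors, up to $d$ of them may be color-blocked; once $d\ge 2$ you no longer have a guaranteed branching factor $\ge 2$, so the geometric growth is not established. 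The paper's random-subset device sidesteps both issues simultaneously by never building anything non-rainbow in the first place.
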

In~\cite{G2025}, the result is strengthened to the following.
\begin{theorem}[\cite{G2025}]\label{thm:matchings}
    For any $\alpha > \frac{1}{2}$, there exists $C>0$ such that the following holds. Let~$G$ be an $n$-vertex graph admitting an edge coloring with~$n$ colors such that each color class is non-empty. If there are at least~$\alpha n$ color classes, each of which contains a matching of size two, then the rainbow girth of~$G$ is at most $C\log n$.
\end{theorem}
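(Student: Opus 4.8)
The plan is to peel off the colour classes that do not contain a matching of size two by a merging argument, reducing to a setting in which \emph{every} colour class contains such a matching, and then to apply the corresponding logarithmic bound of~\cite{AG2023}.

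Call a colour class \emph{good} if it contains a matching of size two and \emph{bad} otherwise. By hypothesis there are $g\ge\alpha n$ good classes and hence $b:=n-g\le(1-\alpha)n$ bad ones; since $\alpha>\frac12$ we get $b\le(1-\alpha)n<\alpha n\le g$. Choose $b$ distinct good colours and, for each bad colour, recolour all of its edges with its chosen good colour. The new colouring uses exactly $g$ colours; every class still contains a matching of size two, because each merged class contains the matching that its host good class already contained; and the vertex set is untouched. Crucially, merging only identifies colours, so any cycle that is rainbow for the new colouring is rainbow for the original one. Hence it suffices to bound the rainbow girth of $G$ under the new colouring, in which all $g\ge\alpha n$ colour classes are good; in particular, fixing one size-two matching $\{e_i,f_i\}$ inside each class exhibits $2g\ge2\alpha n>n$ distinct edges on $n$ vertices.

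It remains to treat this ``supercritical'' regime. I would run the rainbow-BFS expansion argument in the spirit of~\cite{AG2023}: from a root vertex grow a rainbow tree layer by layer, at each step extending from the current frontier along edges of as-yet-unused good colours into as-yet-unreached vertices; as soon as an unused colour has an edge joining two already-reached vertices at depth $O(\log n)$, that edge closes a rainbow cycle of length $O(\log n)$, and otherwise one shows the reached set grows by a constant factor per layer, so it exhausts $V$ within $O(\log n)$ layers and a short rainbow cycle is again forced. The point is that the engine of this argument is not the equality ``$n$ colours on $n$ vertices'' but only the density inequality $2g>n$ (equivalently, more than $n/2$ good colours), which is precisely what the merging step delivers; read this way, \cite{AG2023} yields rainbow girth at most $C(\alpha)\log n$.

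The real obstacle is this last step: extracting geometric expansion of the rainbow-BFS from $2g>n$ alone. One cannot simply invoke ``a graph with more edges than vertices has logarithmic girth'', since $2g>n$ only forces average degree a shade above $2$, not minimum degree $3$, and passing to a $2$-core leaves minimum degree $2$, which is compatible with linear girth. What must be exploited is that each colour supplies \emph{two disjoint} edges, so it can be deployed from either of two independent locations of the tree; quantifying this is exactly what pins the threshold at $\alpha=\frac12$, since when $g=n/2$ one can pair the edges of an $n$-cycle into size-two matchings and obtain a colouring with no rainbow cycle at all, so the strict inequality is essential and $C(\alpha)$ must blow up as $\alpha\downarrow\frac12$. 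Performing a colour-respecting cleaning, handling the boundary, and tracking the dependence of $C$ on $\alpha$ form the technical heart; note also that a purely black-box appeal to~\cite{AG2023} as literally stated does not obviously work, since padding with dummy matching-colour classes always introduces more new vertices than new colours (a cycle-free dummy part needs more vertices than edges) and so cannot erase the deficit $n-g$.
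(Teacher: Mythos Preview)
Your proposal is not a proof but an outline with an explicitly acknowledged gap at the decisive step. The merging reduction is correct (though unnecessary: one may simply keep one edge from each bad class, or discard the bad classes outright, and the same inequality $g>\tfrac n2$ results), and you correctly note that a black-box citation of~\cite{AG2023} does not close the argument once the colour count drops below~$n$. But you then stop, calling the missing expansion analysis ``the technical heart'' and leaving it undone. That missing piece \emph{is} the theorem: everything before it is bookkeeping.

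The paper's route is entirely different and avoids any tree-growing. \refT{thm:matchings} is the $\beta=0$ special case of \refT{thm:main}, proved via \refT{thm:mainstronger} by a short first-moment computation: let $S$ be a $p$-random vertex subset with $p=1-t$, and from each colour class retain one edge lying inside $S$ if one exists, forming a rainbow subgraph $H$ on $S$. A size-two matching has an edge inside $S$ with probability $2p^2-p^4$ and a single edge survives with probability $p^2$, so the expected excess satisfies
\[
\cE\bigl(|E(H)|-|V(H)|\bigr)\ \ge\ \alpha n(2p^2-p^4)+(1-\alpha)n\,p^2-np\ =\ p(1-p)\bigl(\alpha p(1+p)-1\bigr)n,
\]
which for $\alpha>\tfrac12$ and suitably small $t$ is at least $\delta(\alpha)\,n$. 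Some instance of $H$ then has linear excess, and the Bollob\'as--Szemer\'edi bound (\refT{thm:BS}) yields a cycle in $H$---automatically rainbow in $G$---of length $O_\alpha(\log n)$. Your worry that ``average degree barely above $2$ is compatible with linear girth'' is exactly what the random thinning dissolves: sampling vertices converts the colour surplus $2\alpha-1>0$ into genuine edge excess $\Omega(n)$, after which \refT{thm:BS} applies directly. No BFS, no frontier analysis, and the dependence of $C$ on $\alpha$ is explicit.
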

The example in~\cite[Section 3.1.2]{G2025}, which is an $n$-vertex graph satisfying all the other conditions in~\refT{thm:matchings} for $\alpha= \frac{1}{2}$ and having linear rainbow grith, shows that the condition $\alpha > \frac{1}{2}$ is tight to ensure logarithmic rainbow girth.

In this note, we further strengthen the above result.
\begin{theorem}\label{thm:main}
    For any $\alpha,\beta\ge 0$ with $2\alpha+\beta >1$, there exists $C>0$ such that the following holds. 
    Let~$G$ be an $n$-vertex graph admitting an edge coloring with~$n$ colors. If the family of color classes $\cF=(F_1,\dots,F_n)$ satisfies the following conditions:
    \begin{itemize}
\item        each color class is non-empty,
 \item    there exists $\cF_M\subseteq \cF$ such that $|\cF_M|= \alpha n$ and each color class in~$\cF_M$ contains a matching of size two,
 \item and there exists $\cF_S\subseteq \cF\setminus\cF_M$ such that $|\cF_S|=\beta n$ and each color class in~$\cF_S$ contains a star of size two.
    \end{itemize}
Then the rainbow girth of~$G$ is at most $C\log n$.
\end{theorem}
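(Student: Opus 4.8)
The plan is to deduce Theorem~\ref{thm:main} from Theorem~\ref{thm:matchings} by a vertex-splitting operation that turns each ``star of size two'' into a ``matching of size two'' at the cost of one extra vertex per star. Fix, for each $i\in\cF_M$, a matching $\{e_i,f_i\}\subseteq F_i$ of size two, and for each $j\in\cF_S$ a star $\{g_j,h_j\}\subseteq F_j$ of size two with common endpoint $c_j$, say $g_j=c_jx_j$ and $h_j=c_jy_j$ with $x_j\neq y_j$. Build $G'$ from $G$ by, for every $j\in\cF_S$, adding a new vertex $c_j'$ and reassigning the endpoint $c_j$ of the edge $h_j$ to $c_j'$ (i.e.\ $h_j$ becomes $c_j'y_j$); all other edges and all colours are left unchanged. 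Since we only ever split vertices and never merge them, $G'$ is simple, it carries the same $n$ colours (all non-empty), the $\alpha n$ classes of $\cF_M$ still contain a matching of size two, and each class $F_j$ with $j\in\cF_S$ now contains the matching $\{c_jx_j,\,c_j'y_j\}$ of size two (the four vertices $c_j,x_j,c_j',y_j$ are distinct). So $G'$ has $N:=(1+\beta)n$ vertices and at least $(\alpha+\beta)n$ colour classes containing a matching of size two, and the arithmetic identity $(\alpha+\beta)n>\tfrac12 N\iff 2(\alpha+\beta)>1+\beta\iff 2\alpha+\beta>1$ shows that our hypothesis is exactly the statement that more than half the vertices of $G'$ are covered by matching-classes.

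Next I would apply Theorem~\ref{thm:matchings} to $G'$ with $\gamma:=\tfrac{\alpha+\beta}{1+\beta}>\tfrac12$ (more precisely, in the form that only requires at least $\gamma N$ colour classes to contain a matching of size two, rather than literally ``$N$ colours on $N$ vertices''; see the last paragraph), obtaining a rainbow cycle $C'$ in $G'$ of length at most $C_0(\gamma)\log N = O_{\alpha,\beta}(\log n)$.

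It then remains to pull $C'$ back to $G$. The only difference between $G'$ and $G$ is that each pair $\{c_j,c_j'\}$ is a single vertex $c_j$ in $G$, and no edge was ever added between the two copies; in particular $G'$ is simple, so $|C'|\ge 3$. If $C'$ uses at most one vertex of each pair $\{c_j,c_j'\}$, then it maps to a cycle of $G$ of the same length on the same (renamed) edges, hence to a rainbow cycle of $G$, and we are done. Otherwise $C'$ maps to a closed walk $W$ in $G$ of length $|C'|\le O(\log n)$ all of whose edges have distinct colours, and here I would use the elementary fact that a closed walk of length at least $3$ in a simple graph with pairwise differently coloured edges contains a rainbow cycle: split at a repeated vertex into two shorter closed walks, one of which has length $\ge 3$, and induct; length-$1$ (a loop) and length-$2$ (two distinct edges on the same pair of vertices) closed walks are impossible in a simple graph, so the recursion terminates at a genuine cycle, whose edges form a subset of $E(W)$ and are therefore still pairwise differently coloured. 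Either way $G$ has a rainbow cycle of length $O_{\alpha,\beta}(\log n)$.

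The step I expect to require the most care is pinning down the exact form of Theorem~\ref{thm:matchings} that is used: after the reduction $G'$ has only $n$ colours, which is strictly fewer than $N=(1+\beta)n$ when $\beta>0$, and one cannot in general pad $G'$ up to $N$ colour classes, since adding edges risks creating spurious short rainbow cycles that do not descend to $G$, while splitting existing classes would either exhaust the available ``extra'' edges (there need not be any) or destroy matching-classes. So I would either (i) re-run the proof of Theorem~\ref{thm:matchings} with $N$ in place of $n$, checking that the number of colours enters only as an upper bound on the number of ``bad'' (non-matching) classes — and $G'$ has at most $(1-\alpha-\beta)n\le(1-\gamma)N$ of those, so the argument should go through verbatim — or (ii) quote \cite{G2025} directly in the stronger form ``a graph on $N$ vertices with at least $\gamma N$ colour classes containing a matching of size two, for fixed $\gamma>\tfrac12$, has rainbow girth $O_\gamma(\log N)$''. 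A secondary point is the bookkeeping when some $e_i,f_i$ or some edge of an ``other'' class is incident to one or more split centres $c_j$: this is harmless — every edge retains a well-defined pair of endpoints after all reassignments, and disjointness within $\cF_M$ is preserved because no vertices are merged — but it must be stated explicitly. Finally, to see the threshold is sharp one would exhibit, for $2\alpha+\beta=1$, a family with linear rainbow girth, presumably a prism-type construction generalising \cite[Section~3.1.2]{G2025}, assembled from $\alpha n$ pairwise-disjoint matchings of size two and $\beta n$ paths of length two so that every short cycle is forced to repeat a colour.
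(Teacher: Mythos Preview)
Your reduction to Theorem~\ref{thm:matchings} via vertex splitting is elegant, and the pull-back step (closed walk with distinct colours to rainbow cycle) is correctly handled. However, the step where you invoke Theorem~\ref{thm:matchings} on $G'$ has a genuine gap that cannot be repaired in the way you suggest.

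The graph $G'$ has $N=(1+\beta)n$ vertices but only $n$ colour classes. Theorem~\ref{thm:matchings} as stated requires $N$ colour classes on $N$ vertices; your option~(ii) of quoting a stronger form from \cite{G2025} is not available, since that result also assumes as many colours as vertices. Your option~(i), re-running the proof, fails for a concrete reason: you write that ``the number of colours enters only as an upper bound on the number of bad (non-matching) classes'', but this is backwards. In the random-subset proof (see Theorem~\ref{thm:mainstronger} here, or \cite{G2025}), each single-edge class contributes $p^{2}$ to $\cE|E(H)|$; the argument needs a \emph{lower} bound on the number of such classes, not an upper bound. After your reduction, $G'$ has only $(1-\alpha-\beta)n$ non-matching classes, whereas the proof would require about $(1-\gamma)N=(1-\alpha)n$ of them; the shortfall is exactly $\beta n$. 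Concretely, take $\alpha=0.1$, $\beta=0.9$ (so $2\alpha+\beta=1.1>1$): then $G'$ has $N=1.9n$ vertices, $n$ matching classes ($\gamma=1/1.9\approx 0.526$), and \emph{no} other classes. The quantity to be made positive is $\gamma(2p^{2}-p^{4})-p=p\bigl(\gamma(2p-p^{3})-1\bigr)$, and since $\max_{p\in[0,1]}(2p-p^{3})=\tfrac{4}{3}\sqrt{2/3}\approx 1.089$, we get $\gamma(2p-p^{3})\le 1.089/1.9<1$ for every $p$, so $\cE K<0$ throughout and the argument yields nothing.

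The paper sidesteps this entirely by running the random-subset argument directly on $G$, with no reduction: a star of size two contributes $2p^{2}-p^{3}$ to $\cE|E(H)|$ (the probability that at least one of its two edges lies in the random set), strictly more than a single edge's $p^{2}$, and writing $p=1-t$ the excess computation gives $\cE K\ge (2\alpha+\beta-1)\,tn + O(t^{2})n$, which is positive for small $t$. The star classes are simply handled in place, and this is why the threshold $2\alpha+\beta>1$ appears so naturally.
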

\begin{remark}\label{rmk:abtight}
 To have logarithmic rainbow girth, the condition $2\alpha+\beta >1$ in~\refT{thm:main} is tight. See~\refEx{ex:abtight}.   
\end{remark}
In~\refT{thm:mainstronger}, we prove a slightly stronger result, which allows the numbers of color classes to be less than those in~\refT{thm:main}. As an immediate implication, if each of the~$n$ color classes is of size at least two, then a small fraction of color classes that contain a matching of size two guarantee the rainbow girth to be logarithmic.
\begin{corollary}\label{cor:matching}
    For any $\alpha >0$, there exists $C>0$ such that the following holds. Let~$G$ be an $n$-vertex graph admitting an edge coloring with~$n$ colors such that each color class is of size at least two. If there are at least $\alpha n$ color classes, each of which contains a matching of size two, then the rainbow girth of~$G$ is at most $C\log n$.
\end{corollary}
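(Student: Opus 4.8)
The plan is to derive \refC{cor:matching} from \refT{thm:main} by means of one structural observation together with a routine discretization of the split ratio between $\cF_M$ and $\cF_S$. We may assume $0<\alpha\le 1$. Observe first that under the hypothesis a rainbow cycle always exists: choosing one edge from each of the $n$ (non-empty) color classes produces a subgraph with $n$ edges spanning at most $n$ vertices, hence not a forest, so it contains a cycle, and that cycle is rainbow since its edges lie in pairwise distinct color classes. Thus the rainbow girth is at most $n$, and it suffices to prove the logarithmic bound for all sufficiently large $n$, enlarging the final constant to absorb the finitely many small cases.

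Next, the structural step: partition $\cF$ into $\mathcal{A}$, the classes that contain a matching of size two, and $\mathcal{B}:=\cF\setminus\mathcal{A}$. By hypothesis $|\mathcal{A}|\ge\alpha n$. Every $F\in\mathcal{B}$ has $|F|\ge 2$ and contains no two disjoint edges, so any two distinct edges of $F$ share a vertex; hence $F$ contains a star of size two. Consequently $\mathcal{A}$ is an admissible choice for $\cF_M$ and $\mathcal{B}$ for $\cF_S$ in \refT{thm:main}, the only freedom being how large a subfamily of each we select.

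Now write $|\mathcal{A}|=tn$ with $t\in[\alpha,1]$. For fixed reals $\alpha',\beta'\ge 0$ with $2\alpha'+\beta'>1$, \refT{thm:main} applies to our instance provided $\alpha'\le t$ and $\beta'\le 1-t$, so that subfamilies $\cF_M\subseteq\mathcal{A}$ and $\cF_S\subseteq\mathcal{B}$ of the required sizes exist (integrality of $\alpha' n,\beta' n$ is arranged by rounding each down, which alters $2\alpha'+\beta'$ by $O(1/n)$ and is harmless once $n$ is large relative to a quantity depending only on $\alpha$). Cover $[\alpha,1]$ by finitely many closed intervals $[\ell_j,r_j]$, $j=1,\dots,k$, with $\ell_j\ge\alpha$ and $r_j<2\ell_j$ — for instance $\ell_j=\alpha(3/2)^{j-1}$ and $r_j=\min\{\alpha(3/2)^{j},1\}$, which needs only $k=O(\log(1/\alpha))$ intervals, a number depending only on $\alpha$. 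For the $j$-th interval set $\alpha_j:=\ell_j$ and $\beta_j:=\max\{0,1-r_j\}$; then $2\alpha_j+\beta_j>1$ (it equals $1+(2\ell_j-r_j)$ when $r_j<1$, and is at least $2\ell_j\ge 4/3$ when $r_j=1$, since then $\ell_j\ge 2/3$), and one checks $\alpha_j+\beta_j<1$ so both subfamilies fit. Any instance has its value $t$ in some interval $[\ell_j,r_j]$, whence $\alpha_j\le t=|\mathcal{A}|/n$ and $\beta_j\le 1-t=|\mathcal{B}|/n$, so \refT{thm:main} yields rainbow girth at most $C(\alpha_j,\beta_j)\log n$. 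Taking $C:=\max_{1\le j\le k}C(\alpha_j,\beta_j)$, enlarged to cover the small-$n$ cases, completes the proof.

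I expect the only real subtlety — and it is a mild bookkeeping one — to be this last point: \refT{thm:main} supplies a constant depending on the chosen pair $(\alpha',\beta')$, whereas here that pair is not prescribed but varies with the instance, so before maximizing one must reduce the continuum of admissible pairs to a finite list. Monotonicity (passing to subfamilies only weakens the hypothesis) makes this reduction legitimate, and the crucial feature is that the size of the list, and hence the constant $C$, depends only on $\alpha$. If one instead appeals to \refT{thm:mainstronger}, whose hypotheses are already phrased with lower bounds on $|\cF_M|$ and $|\cF_S|$, the corollary is immediate, with the structural observation above as the sole remaining ingredient.
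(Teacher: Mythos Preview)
Your argument is correct. The paper does not spell out a proof of the corollary; it records it as an ``immediate implication'' of \refT{thm:mainstronger}, and the intended justification is to look inside that theorem's proof rather than to apply its statement as a black box. In the proof one has
\[
\cE K \;\ge\; |\cF_M|\,(2p^2-p^4) + |\cF_S|\,(2p^2-p^3) - np,
\]
and since $2p^2-p^4 \ge 2p^2-p^3$ for $p\in(0,1)$, replacing the actual matching fraction $t\ge\alpha$ by $\alpha$ (and correspondingly $1-t$ by $1-\alpha$) only lowers the right-hand side. The computation with $(\alpha',\beta')=(\alpha,1-\alpha)$ then yields excess $\Omega(\alpha^2)\,n$, a bound depending solely on $\alpha$, and \refT{thm:BS} finishes. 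Your route avoids opening the proof, at the price of the finite cover of $[\alpha,1]$; both are legitimate, and yours has the virtue of treating \refT{thm:main} as a genuine black box.

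One small correction to your closing remark: the lower-bound phrasing of \refT{thm:mainstronger} does not by itself eliminate the discretization. With a single fixed pair $(\alpha',\beta')=(\alpha,1-\alpha)$, the requirement $|\cF_S|\ge(\beta'-\xi)n$ still fails once the matching fraction $t$ exceeds $\alpha+\xi$, because a color class that is exactly a matching of size two contains no star of size two and hence cannot be reassigned to $\cF_S$. So ``immediate from \refT{thm:mainstronger}'' should be read as immediate from its proof via the monotonicity above, not from its statement alone; your finite cover is one honest way to make the deduction work purely from the stated theorems.
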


Since a non-star edge set not containing a matching of size 2 is a triangle, in~\cite{ABCGZ2023} the following result regarding the case that each color class is a triangle is proved.
\begin{theorem}[\cite{ABCGZ2023}]\label{thm:triangleold}
    There exists $C>0$ such that for any $n$-vertex graph and edge coloring with~$n$ colors, if each color class contains a triangle, then the rainbow girth is at most~$C\log n$.
\end{theorem}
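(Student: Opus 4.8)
The plan is to pass to a subgraph on which the coloring is completely rigid, find a short cycle there by a density argument, and then locally \emph{repair} it into a rainbow cycle. First I would discard edges so that each color class is exactly one triangle $T_i$ it contains; throwing away edges only makes a short rainbow cycle harder to produce, so this loses nothing. The triangles $T_1,\dots,T_n$ are then pairwise edge-disjoint — an edge of $T_i\cap T_j$ would have to receive both colors $i$ and $j$ — so $H:=T_1\cup\dots\cup T_n$ has exactly $3n$ edges on at most $n$ vertices. The structural fact I would rely on is a dichotomy: every triangle of $H$ is either one of the $T_i$, and then monochromatic, or else has its three edges in three distinct colors, and then is already a rainbow $3$-cycle. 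This holds because once two edges of a triangle share a color $c$ they span the $3$-vertex triangle $T_c$, which forces the third edge into color $c$ too.

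Next I would locate a short cycle in $H$. If $H$ has any triangle other than the $T_i$, the dichotomy gives a rainbow $3$-cycle and we are done, so assume every triangle of $H$ equals some $T_i$. Deleting one arbitrary edge from each $T_i$ then yields a triangle-free graph $H'$ with exactly $2n$ edges on $k\le n$ vertices. Since a $2$-degenerate graph on $k$ vertices has at most $2k-3<2n$ edges, $H'$ is not $2$-degenerate, hence contains a non-empty subgraph $H''$ of minimum degree at least $3$. A routine Moore-bound argument then shows that the girth of $H''$, and hence the length of some cycle $C$ in $H''$, is $O(\log n)$; moreover $|C|\ge 4$, because $H''\subseteq H'$ is triangle-free.

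The last step is to repair $C$, now viewed as a cycle of $G$, into a rainbow cycle of no greater length. Since $C$ is a cycle it contains at most two edges of any $T_i$ (three would force $C=T_i$, contradicting $|C|\ge 4$). Whenever $C$ uses two edges of some $T_i$, they meet at a vertex $v$ of $T_i$; I would delete $v$ from $C$ and join its two neighbors — the other two vertices $u,w$ of $T_i$ — by the third edge $uw\in T_i$. This keeps $C$ a cycle, shortens it by one, uses color $i$ exactly once, and changes no other color's count on the cycle. It also never collapses $C$ into a monochromatic triangle: that could only arise from a $4$-cycle $x,u,v,w$, and if the resulting triangle $\{u,w,x\}$ were some $T_j$ then $uw\in T_i\cap T_j$ gives $j=i$, whence $\{u,w,x\}=T_i=\{u,v,w\}$ and $x=v$, which is impossible. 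Iterating over all colors used twice strictly decreases $\sum_i\max(|E(C)\cap T_i|-1,0)$ and so terminates with a rainbow cycle of length at most $|C|=O(\log n)$.

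The delicate point — and where I expect most of the work to hide — is the tightness of the edge count. Keeping all $3n$ edges of $H$ is only just enough to guarantee a minimum-degree-$3$ subgraph (from which short cycles come essentially for free), but the short cycle one extracts could be a monochromatic triangle, which is useless and not repairable; and breaking all monochromatic triangles by deleting edges normally brings the count down to $2n$, for which no minimum-degree-$3$ subgraph need exist. What makes the argument go through is that when every triangle of $H$ is monochromatic, the deletion can be arranged to leave a triangle-free graph, and triangle-free graphs with $2n$ edges on at most $n$ vertices are never $2$-degenerate, so a dense subgraph survives after all.
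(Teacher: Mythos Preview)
Your argument is correct and follows essentially the same two-step template as the paper's (as seen in the proof of \refT{thm:nonstar} specialised to the all-triangle case): pass to the subgraph keeping two edges of each $T_i$, locate a short cycle by a density/excess argument, and then repeatedly trade two same-coloured edges for the third side of their triangle until the cycle is rainbow. The difference is only in the short-cycle step. The paper invokes the Bollob\'as--Szemer\'edi bound (\refT{thm:BS}) directly on the graph with $2n$ edges and excess $\ge n$, whereas you take a more elementary route via degeneracy and the Moore bound. Your approach avoids citing \refT{thm:BS}, at the cost of the extra case split on whether $H$ contains a non-monochromatic triangle. In fact that split, and the concern in your last paragraph, are not really needed: any triangle in $H'$ (the two-edge-per-triangle graph) is automatically rainbow, since two $H'$-edges of the same colour~$i$ already force the triangle to equal $T_i$, which is missing its third edge in $H'$. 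So the short cycle that Bollob\'as--Szemer\'edi (or your Moore-bound argument, applied without first establishing triangle-freeness) produces is either a rainbow triangle or has length $\ge 4$, and the repair goes through exactly as you wrote.
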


In~\cite{G2025},~\refT{thm:triangleold} is extended to the following.
\begin{theorem}[\cite{G2025}]\label{thm:trianglematching}
    There exits $C>0$ such that for any $n$-vertex graph and edge coloring with~$n$ colors, if each color class contains either a matching of size two or a triangle, then the rainbow girth is at most $C\log n$.
\end{theorem}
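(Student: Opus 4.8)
The plan is to prove, by induction on the number of vertices, the slightly more flexible statement that an $m$-vertex graph edge-coloured with \emph{at least} $m$ colours, each non-empty class containing a matching of size two or a triangle, has rainbow girth at most $C\log m$ for an absolute constant $C$ (for bounded $m$ the statement is vacuous, since a matching of size two needs four vertices and a triangle needs three). Allowing the number of colours to exceed the number of vertices is exactly what makes the induction close: whenever a search procedure fails to make progress, the unexplored part of the graph will still carry strictly more colour classes than it has vertices, so the induction hypothesis applies there.

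The engine is a rainbow breadth-first search. Fix a root $r$ and grow a rainbow BFS tree $T$: in each round, for every colour not yet used in $T$ having an edge from the current reached set $R$ to $V\setminus R$, pick one such edge, add its outside endpoint to $R$, and add the edge to $T$; if several unused colours reach out to the same vertex, only one enters $T$ and each of the others then has an edge with both endpoints in $R$. There are three ways the process can end. \textbf{(i) A back edge appears:} some unused colour has an edge with both endpoints in $R$; closing it through the tree gives a rainbow cycle of length at most $2\,\mathrm{depth}(T)+1$, which is $O(\log m)$ provided the depth has stayed logarithmic. \textbf{(ii) The reached set is exhausted:} once $R=V$, the tree uses only $m-1$ colours, so some colour is unused, and — its classes lying entirely inside $V$ — it supplies a back edge as in (i). \textbf{(iii) The search stalls:} no unused colour crosses the boundary, so every unused colour is confined to $V\setminus R$; since $T$ uses $|R|-1$ colours, $G[V\setminus R]$ carries at least $m-|R|+1>|V\setminus R|$ colour classes, each still containing a matching of size two or a triangle, and the induction hypothesis yields a rainbow cycle of length $O(\log|V\setminus R|)=O(\log m)$ there.

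The remaining task — and the crux — is to show that, as long as neither (i) nor (iii) intervenes, the reached set expands by a constant factor each round, so that (ii) is reached within $O(\log m)$ rounds and $T$ has depth $O(\log m)$. This is where the structure of the colour classes enters: a triangle colour $\{x,y,z\}$ acts like a degree-three gadget — reaching one of its vertices lets us branch to either of the other two, and its two still-unused edges furnish extra candidate back edges — while a matching colour $\{ab,cd\}$ guarantees the ability to reach out from either of two disjoint pairs of vertices. The intended dichotomy is that in a round which neither stalls nor produces a back edge, the number of unused colours crossing the boundary is comparable to $|V\setminus R|$ — for otherwise $V\setminus R$ would contain almost as many confined colour classes as vertices, putting us into (a robust version of) scenario (iii) — so that $|V\setminus R|$ shrinks geometrically.

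Turning that last paragraph into a rigorous argument is the main obstacle. The naive count only shows that an unstalled round adds at least two vertices, which is far from geometric expansion; ruling out a long, path-like BFS tree requires a genuine amortisation of the colour classes "hiding" in $V\setminus R$ against the recursion, and in particular it requires treating matching colours — which offer only a binary branch, hence strictly less expansion than triangle colours — on the same footing as triangle colours. It is exactly here that one must use that every class is a matching of size two or a triangle rather than an arbitrary class of size two: a star of size two, for instance, forces all of its edges through a single vertex and provides far less, which is why the analogue with "size two" in place of "matching of size two or triangle" fails. This expansion analysis is the heart of the proof.
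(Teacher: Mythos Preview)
Your proposal has a genuine gap, and you name it yourself: the geometric-expansion step is never carried out. You write that ``the naive count only shows that an unstalled round adds at least two vertices'' and that establishing constant-factor growth ``is the main obstacle'' --- but you then stop. Without that step the BFS tree can have linear depth, and scenarios (i)--(ii) yield only a linear bound on the rainbow girth. Nothing in the sketch explains why the matching/triangle hypothesis forces the frontier to expand by a constant factor; a matching colour contributes a single crossing edge when exactly one of its two edges straddles the boundary, and it is easy to imagine configurations in which, round after round, only $O(1)$ unused colours cross while the rest sit entirely in $V\setminus R$ with the induction never quite triggering (because the confined colours number just below $|V\setminus R|$). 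The ``amortisation'' you allude to would have to be the entire argument, and it is not supplied. As it stands this is a plan with its load-bearing lemma missing.

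The paper's route (via its \refT{thm:nonstar}, of which the statement is the case $\alpha=1$) is completely different and sidesteps the expansion issue altogether. There is no BFS and no induction; the engine is the Bollob\'as--Szemer\'edi bound (\refT{thm:BS}) that an $n$-vertex graph with excess $\Omega(n)$ has girth $O(\log n)$. One splits into two cases. If at least $n/2$ classes contain a matching of size two, take a $p$-random vertex subset $S$, keep one edge per class that lies inside $S$, and compute that for suitable $p<1$ the expected excess of the resulting rainbow subgraph is $\Omega(n)$; Bollob\'as--Szemer\'edi then gives a rainbow cycle of length $O(\log n)$. If instead at least $n/2$ classes contain a triangle, take \emph{two} edges from each such triangle and one edge from every other class; the resulting subgraph has excess at least $n/2$, hence girth $O(\log n)$, and any repeated colour in a shortest cycle comes from two edges of a monochromatic triangle, which one replaces by the third edge to shorten the cycle --- iterating yields a rainbow cycle. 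The matching/triangle hypothesis is used only to manufacture linear excess, not to drive an expansion argument.
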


We further extend~\refC{cor:matching} and~\refT{thm:trianglematching} and show that a small fraction of non-star color classes make the rainbow grith logarithmic.
\begin{theorem}\label{thm:nonstar}
    For any $\alpha >0$, there exists~$C>0$ such that the following holds. Let~$G$ be an $n$-vertex graph admitting an edge coloring with~$n$ colors such that each color class is of size at least two. If there are at least~$\alpha n$ color classes, each of which contains either a matching of size two or a triangle, then the rainbow girth of~$G$ is at most $C \log n$.
\end{theorem}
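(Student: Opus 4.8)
The plan is to reduce Theorem~\ref{thm:nonstar} to the already-established Theorem~\ref{thm:main} (or rather to its slightly stronger form Theorem~\ref{thm:mainstronger}) by converting the triangle color classes into matching-of-size-two color classes and star-of-size-two color classes in a controlled way. The key observation is that a triangle on vertices $\{x,y,z\}$ contains three edges $xy,yz,zx$; if we are willing to ``split'' one triangle color class into pieces, a triangle is, combinatorially, just barely not a matching of size two, and the extra edge gives us slack. So I would first set up an averaging/dichotomy: among the $\alpha n$ non-star color classes, at least $\alpha n/2$ of them are matchings of size two, or at least $\alpha n/2$ of them are triangles. In the first case we are immediately done by Corollary~\ref{cor:matching} (equivalently by Theorem~\ref{thm:main} with $\beta=0$ and the fraction $\alpha/2$). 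So the whole content is the case where we have at least $\alpha n /2$ triangle color classes, each color class non-empty, and in fact each color class of size at least two.

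The main step is then the following recoloring/merging argument. Take the triangle color classes $F_1,\dots,F_t$ with $t \ge \alpha n/2$, say with triangles $T_1,\dots,T_t$. Pair them up greedily: $(T_1,T_2),(T_3,T_4),\dots$, giving $\lfloor t/2\rfloor$ pairs. For each pair $(T_{2j-1},T_{2j})$ I want to recombine the six edges (with multiplicity) of the two triangles into \emph{new} color classes, at least one of which contains a matching of size two, while only using a bounded number of color names and not decreasing the total edge budget. Concretely, from two vertex-disjoint triangles we can form a perfect matching of size three, hence a matching of size two inside one new color class; from two triangles sharing a vertex or an edge we can likewise extract a matching of size two by picking a suitable pair of edges, and bundle the rest as star-of-size-two classes. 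The accounting: each pair of triangle classes ($2$ old colors) is replaced by roughly $2$ new colors, one of which is a matching of size two — so from $t$ triangle classes we manufacture about $t/2$ matching-of-size-two classes. Combined with the original $\alpha n/2$ (or more) genuinely matching or star classes, after renaming we land in the hypothesis of Theorem~\ref{thm:mainstronger} with parameters $\alpha' \approx \alpha/4$, $\beta' \ge 0$ satisfying $2\alpha' + \beta' > 1$? — this is exactly where I have to be careful, because $2(\alpha/4) = \alpha/2$ need not exceed $1$.

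Here is the fix, and this is the heart of the argument: I should not aim to satisfy the \emph{numerical} threshold $2\alpha+\beta>1$ of Theorem~\ref{thm:main} on the nose — after all, $\alpha$ in Theorem~\ref{thm:nonstar} is allowed to be tiny — but instead to invoke the \emph{stronger} statement Theorem~\ref{thm:mainstronger}, which (per the paper's own description) ``allows the numbers of color classes to be less than those in Theorem~\ref{thm:main}.'' That stronger statement, together with the fact that every color class here has size at least two, is precisely what lets a \emph{small} fraction of good classes suffice, just as in Corollary~\ref{cor:matching}. So the clean route is: (i) WLOG at least $\alpha n/2$ non-star classes are triangles; (ii) merge triangle classes pairwise to produce $\ge \alpha n/4 - 1$ color classes each containing a matching of size two, while the remaining color classes (old stars, plus leftover edges from the merges) all still have size at least two; (iii) apply the ``size $\ge 2$ plus a constant fraction of matching-of-size-two classes'' conclusion — that is, Corollary~\ref{cor:matching} with $\alpha \rightsquigarrow \alpha/4$ — to the recolored graph; (iv) translate a short rainbow cycle in the recolored graph back to a short rainbow cycle in $G$, noting that distinct new colors came from distinct old colors (or, if an old triangle color contributed edges to two new colors, a rainbow cycle in the new coloring uses at most... — here one must double-check that no rainbow cycle uses two edges descended from the same original color, which is guaranteed if in the merging each \emph{original} color class is split so that its edges go to pairwise ``rainbow-incompatible'' new classes, e.g. by keeping at most one edge of each original triangle per new class and discarding the third edge of each triangle).

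The main obstacle, I expect, is step (ii) done \emph{with the right bookkeeping}: I must merge pairs of triangle color classes into new color classes so that simultaneously (a) at least a constant fraction of the new classes contain a matching of size two, (b) every new color class still has size $\ge 2$ (so Corollary~\ref{cor:matching}'s hypothesis holds), and (c) within any original color class, its edges are distributed among new classes in a way that no cycle can pick up two edges of the same original color — so that rainbow-in-new implies rainbow-in-original. The cleanest implementation is probably: discard one edge from each triangle (leaving each triangle class a matching/path of size two on its remaining two edges — wait, two edges of a triangle share a vertex, so they form a \emph{path}, i.e.\ a star of size two, not a matching), then pair up the triangles and, for each pair of vertex-disjoint 2-edge paths, note their union contains a matching of size two unless the two paths are ``linked'' in an awkward way — at which point a short local case analysis (at most a handful of configurations up to symmetry) shows one can always extract a matching of size two from four edges coming from two triangles, assigning the surplus edges as size-two star classes and never reusing an original color within a potential cycle. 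I'd organize this as a short self-contained lemma: ``from any two edge-disjoint triangles, the four-or-more available edges can be repartitioned into new color classes, one containing a matching of size two and the rest of size $\ge 2$, with each original color appearing in at most one new class along any cycle,'' and then the theorem follows by iterating it and quoting Corollary~\ref{cor:matching}.
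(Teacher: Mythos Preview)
Your dichotomy and the matching case are right and match the paper. The triangle case, however, has a real gap, and the paper handles it by a much simpler and entirely different route.

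The merging/recoloring scheme cannot be made to work as you outline it. If you \emph{combine} a pair of triangle classes $T_1,T_2$ into a single new class (so that rainbow-in-new $\Rightarrow$ rainbow-in-old), you get $t/2$ matching classes and $n-t$ star classes, for a total of only $n-t/2$ classes; plugging into Theorem~\ref{thm:mainstronger} forces $2\alpha'+\beta'\le 1$, and the threshold is not met. If instead you \emph{split} each pair into two (or more) new classes so as to keep the class count at $n$, then edges of the same original triangle end up in different new classes, and a rainbow-in-new cycle can pick up two edges of the same original color---your ``rainbow-incompatible'' condition is exactly what you cannot arrange while still producing a matching of size two from two edges of \emph{different} original colors. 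The case-analysis lemma you sketch would have to fight both constraints at once, and it does not close.

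The paper bypasses Theorem~\ref{thm:mainstronger} entirely in the triangle case. With at least $\tfrac{\alpha}{2}n$ triangle classes, simply take \emph{two} edges from each such triangle and \emph{one} edge from every other class. This yields a subgraph $F$ on $n$ vertices with at least $(1+\tfrac{\alpha}{2})n$ edges, hence excess $\ge \tfrac{\alpha}{2}n$, so by Bollob\'as--Szemer\'edi (Theorem~\ref{thm:BS}) the girth of $F$ is $O(\log n)$. If the shortest cycle in $F$ is not rainbow, then by construction the only way two of its edges share a color is that they are two sides of a monochromatic triangle; replace those two edges by the third side of that triangle to obtain a strictly shorter cycle in $G$. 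Iterate until rainbow. This ``triangle replacement trick'' is the missing idea.
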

\begin{remark}
    To have logarithmic rainbow girth, the condition $\alpha>0$ is tight. See~\refR{rmk:abtight} for the case $\alpha=0$ and $\beta =1$.
\end{remark}
\refT{thm:nonstarex} later shows that among~$n$ color classes, each of size at least two, the presence of~$\omega(\sqrt{n})$ non-star classes already forces the rainbow girth to be~$o(n)$.

Noting that any set of~$r$ edges is a subset of the edge set of the complete graph~$K_{2r}$, a special case of the following~\refT{thm:lb} (taking the uniformity~$k=2$ and setting~$\mathcal{F}=(C_\ell)_{\ell\ge 2}$ as the collection of cycles) shows that the logarithmic bounds in the above results are of the right order of magnitude.

To state~\refT{thm:lb} concerning hypergraphs,
a finite collection~$F$ of~$k$-sets is called a \emph{$k$-uniform hypergraph}, or a \emph{$k$-graph} for brevity, whose \emph{vertex set} is~$V(F):=\cup_{e\in F}e$. An element of~$F$ is called a \emph{(hyper)edge} of~$F$. A  \emph{complete $k$-graph on~$t$ vertices}, denoted by~$K_t^{(k)}$, is the collection of all $k$-subsets of the $t$-vertex set. The \emph{density} of a hypergraph~$F$ is $\frac{|F|}{|V(F)|}$. A \emph{$\frac{1}{k-1}$-dense sequence} of~$k$-graphs is a sequence $(F_\ell)_{\ell\ge 2}$ of~$k$-graphs such that for each~$\ell$, $|F_\ell|=\ell$ and the density of~$F_\ell$ is at least $\frac{1}{k-1}$, which is equivalent to~$|V(F_\ell)|\le\ell(k-1)$. For example, $(B_\ell^{(k)})_{\ell\ge 2}$, where~$B_\ell^{(k)}$ is a Berge $k$-cycle of length~$\ell$, is a $\frac{1}{k-1}$-dense sequence. 
For a hypergraph, the definition of color classes and rainbow subsets is just same as the graph case.


\begin{theorem}\label{thm:lb}
        For any $L\ge 0$, $t\ge k\ge 2$, and $\delta>0$, there exists a constant $c> 0$ such that the following holds. For any union~$\mathcal{F}$ of $n^{1-\delta}$ many $\frac{1}{k-1}$-dense sequences of $k$-graphs and positive integer~$n$, there exists an $n$-vertex $k$-graph~$H$ and an edge coloring with $Ln$ colors satisfying the following conditions:
    \begin{itemize}
        \item each color class is~$K_t^{(k)}$,
        \item and the minimum size of~$F\in \mathcal{F}$ such that there exists a rainbow copy of~$F$ in~$H$ is at least $c\log n$.
    \end{itemize} 
\end{theorem}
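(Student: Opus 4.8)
The plan is to build $H$ by a random construction — essentially a random $k$-graph on $n$ vertices — and argue that with positive probability it admits a coloring of the required form while having no small rainbow copy of any $F \in \mathcal{F}$. First I would reduce to a cleaner statement: since each color class must be a copy of $K_t^{(k)}$, I want $H$ to be (close to) a disjoint or near-disjoint union of $Ln$ cliques $K_t^{(k)}$, one per color. So fix $V(H) = [n]$, and for each of the $Ln$ colors independently choose a uniformly random $t$-subset $S_j \subseteq [n]$, letting color class $j$ be all $\binom{t}{k}$ of its $k$-subsets (some edges may receive several colors; that is harmless — or one can take $H$ to be the multi-$k$-graph, or split multiply-covered edges, since we only need an \emph{upper} bound on rainbow structures and a coloring witnessing the clique condition). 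This is the natural generalization of the $K_{2r}$-per-color picture mentioned before the theorem statement.

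The heart of the argument is the first-moment bound ruling out short rainbow copies of members of $\mathcal{F}$. Fix $\ell \le c\log n$ and $F \in \mathcal{F}$ with $|F| = \ell$; by the $\frac{1}{k-1}$-density hypothesis, $v := |V(F)| \le \ell(k-1)$. A rainbow copy of $F$ in $H$ is: an injection of $V(F)$ into $[n]$ (at most $n^{v}$ choices), together with, for each of the $\ell$ edges of the embedded copy, a distinct color $j$ whose $t$-set $S_j$ contains that particular $k$-set. For a fixed $k$-set $e$ and a fixed color $j$, $\Pr[e \subseteq S_j] = \binom{n-k}{t-k}/\binom{n}{t} = \Theta(n^{-k})$; so the expected number of colors covering $e$ is $\Theta(L n \cdot n^{-k}) = \Theta(L n^{-(k-1)})$, which is $o(1)$. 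Since the $\ell$ edges must get \emph{distinct} colors and the colors are chosen independently, the probability that a fixed embedded copy of $F$ is rainbow is at most $\big(C_L n^{-(k-1)}\big)^{\ell}$ for a constant $C_L$ depending on $L$ (and $t,k$). Hence the expected number of rainbow copies of $F$ is at most
\[
n^{v}\cdot \big(C_L\, n^{-(k-1)}\big)^{\ell}
\;\le\; n^{\ell(k-1)}\cdot C_L^{\ell}\, n^{-\ell(k-1)}
\;=\; C_L^{\ell}.
\]
This is the crucial cancellation: the density condition $v \le \ell(k-1)$ exactly matches the exponent $n^{-(k-1)}$ per edge, so the powers of $n$ disappear and we are left with $C_L^{\ell}$, which is still potentially large. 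To beat it I sum over $F$: within each $\frac{1}{k-1}$-dense sequence there is exactly one $F$ of each size $\ell \ge 2$, and there are $n^{1-\delta}$ sequences, so the number of candidate $F$ with $|F| = \ell$ is at most $n^{1-\delta}$. Summing over $2 \le \ell \le c\log n$ gives expected total number of short rainbow copies at most $\sum_{\ell \le c\log n} n^{1-\delta} C_L^{\ell} \le c\log n \cdot n^{1-\delta} \cdot C_L^{c\log n} = c\log n\cdot n^{1-\delta + c\log C_L}$, which is $o(1)$ provided $c < \delta/\log C_L$. Choosing $c$ this small, Markov's inequality gives that with probability $\to 1$ there is \emph{no} rainbow copy of any $F \in \mathcal{F}$ of size below $c\log n$; and the coloring by the $S_j$'s satisfies the clique condition by construction. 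The dependence on $L$ enters only through $C_L$, which is why $c$ depends on $L$; the parameter $L \ge 0$ case with $L=0$ is vacuous (no colors, no rainbow copies), so we may assume $L \ge 1$.

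The main obstacle I anticipate is bookkeeping the "distinct colors" requirement cleanly and handling the fact that a single $k$-set of $H$ can lie in many color classes — this is where one must be careful that the bound $\Pr[\text{copy is rainbow}] \le (C_L n^{-(k-1)})^\ell$ is legitimate, i.e. that I am allowed to union-bound over the choice of which $\ell$ distinct colors witness the $\ell$ edges, absorbing the resulting multinomial factor $(Ln)^\ell/\ell! \cdot (\text{coverage prob})^\ell$ into the constant $C_L^\ell$ after the $n^{-k\ell}$ from edge-coverage is split as $n^{-(k-1)\ell}\cdot n^{-\ell}$, the extra $n^{-\ell}$ killing the $(Ln)^\ell$. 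A secondary point is that $|V(F_\ell)| \le \ell(k-1)$ could be a strict inequality, which only helps (gives extra negative powers of $n$), so the worst case is equality and the computation above is tight; one should also note $F$ need not be connected or even a "nice" hypergraph, but the counting only used $|V(F)|$ and $|F|$, so no structural assumption on $F$ beyond the density bound is needed. Finally, to produce a genuine (simple) $k$-graph $H$ rather than a multigraph, after the random choice one can let $H = \bigcup_j \binom{S_j}{k}$ as a set and color each edge by one of the colors covering it arbitrarily; every color class is then a sub-$k$-graph of $K_t^{(k)}$, and to get it to be \emph{exactly} $K_t^{(k)}$ one restricts attention to colors $j$ whose $S_j$ is "private" enough — but since a cleaner route is simply to allow the $Ln$ cliques to be placed on disjoint vertex blocks when $Lt \le n$ and otherwise pass to the multigraph/near-disjoint regime, I would present whichever of these is shortest; the probabilistic argument above is robust to the exact choice.
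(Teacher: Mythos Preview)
Your random model and the core first-moment calculation are essentially the paper's: choose random $t$-sets, let each be a color class $K_t^{(k)}$, and use $|V(F)|\le \ell(k-1)$ to make the powers of $n$ cancel so that the expected number of rainbow copies of a fixed $F\in\mathcal F$ with $|F|=\ell$ is at most $C_L^\ell$. The paper reaches the same bound (with $C_L=4\cdot 2^t t! L$) via the model $G^{(t)}(n,p)$ and the notion of a ``distinguishable'' copy.

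The genuine gap is the sentence claiming the total expected count is $o(1)$. From $c<\delta/\log C_L$ you only get exponent $1-\delta+c\log C_L<1$, i.e.\ the expected number of short rainbow copies is $O(n^{1-\varepsilon})$ for some $\varepsilon>0$, not $o(1)$. For $\delta\le 1$ there is no choice of $c>0$ making the expectation $o(1)$, so Markov does \emph{not} give ``with high probability there are no short rainbow copies''; your argument as written stops here. The paper confronts exactly this: it starts with roughly $3Ln$ random $t$-sets, shows the expected number of short distinguishable copies is $o(n)$, and then \emph{deletes} one $t$-set from each offending copy (and one from each pair of $t$-sets sharing $\ge 2$ vertices), leaving at least $Ln$ color classes. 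Your write-up needs this alteration step; in your model the fix is to begin with, say, $3Ln$ independent $t$-sets and remove at most $2Ln$ of them. Relatedly, your last paragraph's handling of overlapping color classes is too loose: you must actually \emph{produce} an edge coloring in which every class is a full $K_t^{(k)}$, and ``splitting multiply-covered edges'' would destroy that; the clean route (again as in the paper) is to delete one $t$-set from each pair whose intersection has size $\ge k$, which in your model has $O(1)$ expected occurrences, so this costs essentially nothing once you have the alteration framework in place.
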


\section{Proofs and examples}

\subsection{Preliminaries}

For a graph~$H$, the \emph{excess} of~$H$ is defined as the difference between its number of edges and its number of vertices, i.e., $|E(H)|-|V(H)|$.

Bollob\'as and Szemer\'edi~\cite{BS2002} proved the following upper bound on the girth of a graph in terms of its excess.
\begin{theorem}\label{thm:BS}
    For all $n\ge 4$ and $k\ge 2$, every $n$-vertex graph with excess at least~$k$ has girth at most
    \begin{equation}\label{eq:ubongirth}
        \frac{2(n+k)}{3k}(\log_2k+\log_2\log_2k+4). 
    \end{equation}
\end{theorem}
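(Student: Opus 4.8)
The plan is to pass to a sparse weighted multigraph and then run a breadth‑first search weighted by path lengths, in the spirit of classical Moore‑type bounds. First, the reductions. Deleting edges or vertices from $G$ can only increase the girth, while the bound \eqref{eq:ubongirth} is increasing in the number of vertices, so it suffices to bound the girth of a subgraph $G_2\subseteq G$ with $|V(G_2)|\le n$, minimum degree at least $2$, excess \emph{exactly} $k$, and no component equal to a tree or a single cycle. One reaches such a $G_2$ by: deleting every tree component (this cannot empty the graph, since a forest has negative excess); then repeatedly deleting an edge lying on a cycle of a component of positive excess until the total excess drops to $k$ (this preserves connectivity of that component and never turns it into a tree); then deleting vertices of degree $\le 1$ (which never isolates a vertex, as there are no tree components, so the excess stays $k$); and finally deleting components that are single cycles (which does not change the excess). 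Every component of $G_2$ then contains a vertex of degree $\ge 3$; let $S$ be the set of such branch vertices. From $\sum_{v}(\deg_{G_2}(v)-2)=2\,\mathrm{excess}(G_2)=2k$ and $\deg_{G_2}(v)-2\ge 1$ for $v\in S$ we get $|S|\le 2k$.

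Next, suppress the degree‑$2$ vertices of $G_2$: replace each maximal path whose internal vertices all have degree $2$ by a single edge, obtaining a multigraph $G^{*}$ (loops and parallel edges allowed) with vertex set $S$, minimum degree $\ge 3$, and $|E(G^{*})|=|S|+k$. Give each edge $e$ of $G^{*}$ the length $\ell_e\ge 1$ equal to the number of edges of the path it replaced, so that $\sum_{e\in E(G^{*})}\ell_e=|E(G_2)|=|V(G_2)|+k\le n+k$ and, moreover, $\sum_{e}(\ell_e-1)=|V(G_2)|-|S|\le n$. A cycle of $G^{*}$ with edge set $C$ unfolds to a cycle of $G_2$ of length $\sum_{e\in C}\ell_e$, so it suffices to find a cycle $C$ of $G^{*}$ with $\sum_{e\in C}\ell_e\le\frac{2(n+k)}{3k}(\log_2k+\log_2\log_2k+4)$.

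The heart of the argument is a weighted Moore‑type search in $G^{*}$. Put $N:=|S|\le 2k$, $M:=|E(G^{*})|=N+k$ and $W:=\sum_e\ell_e\le n+k$, so the average length is $\bar\ell=W/M\le W/k$; if $N\le 3$ a cheapest loop or pair of parallel edges is already a cycle of length at most $3W/M\le 3(n+k)/k$, which is below \eqref{eq:ubongirth} since $\log_2k+\log_2\log_2k+4\ge 5$, so assume $N$ is large. Choose a root $v\in S$ with $\sum_{e\ni v}\ell_e\le 2W/N$ (possible by averaging) and run a breadth‑first search from $v$ measured in the length metric, building a tree; let $A_\rho$ be the set of vertices within length‑distance $\rho$. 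As long as $G^{*}[A_\rho]$ is a tree it has $|A_\rho|-1$ edges, so minimum degree $\ge 3$ forces at least $|A_\rho|+2$ edges of $G^{*}$ to leave $A_\rho$, and the first moment a non‑tree edge appears inside $A_\rho$ produces a cycle of length at most $2\rho$ plus that edge's length. The quantitative core is to show that while $A_\rho$ remains a tree, $|A_\rho|$ essentially doubles each time $\rho$ advances by a scale $\Lambda=\Theta(\bar\ell)$: the $\ge 3$ downward paths at $v$ give the initial factor $3$, each later branch vertex at least doubles the number of active paths, and the irregularity of the lengths is absorbed using the global budget $\sum_e(\ell_e-1)\le n$, which shows that long edges are too rare to delay the growth. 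Since $|A_\rho|\le N\le 2k$, the growth must terminate by some $\rho$ of order $\Lambda\log_2k\lesssim\bar\ell\log_2k$, at which radius a non‑tree edge lies inside $A_\rho$ and yields a cycle of length $\lesssim 2\bar\ell\log_2k\approx\frac{2(n+k)}{k}\log_2k$. Book‑keeping the constants — in particular that $M\ge N+k$ (not merely $\tfrac32 N$) sharpens $\bar\ell$, and that the root branches into $3$ rather than $2$ paths — upgrades this to $\frac{2(n+k)}{3k}\log_2k$, and the additive slack $\log_2\log_2k+4$ in \eqref{eq:ubongirth} covers the reduction losses, the root‑choice constant, and the ``plus that edge's length'' term.

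I expect the doubling estimate in the last step to be the genuine obstacle. A crude substitute — delete from $G^{*}$ all edges of length $>2W/k$ (which destroys fewer than $k/2$ edges, so the surviving reduced graph still has excess $\ge k/2$ and hence, after cleaning up, minimum degree $\ge 3$) and then apply the classical unweighted Moore bound for minimum degree $3$ — already gives a cycle of length $O\!\big(\tfrac{n+k}{k}\log k\big)$, but with a far worse absolute constant; extracting the sharp coefficient $\tfrac23$ requires running the search on the weighted graph and charging heavy edges against the budget $\sum_e(\ell_e-1)\le n$ without slack. That the coefficient cannot be improved below a constant times this value is shown by taking a $3$‑regular graph on about $2k$ vertices whose girth is near‑extremal ($\approx\log_2k$) and subdividing every edge into a path of length $\approx n/(3k)$: the result has $\approx n$ vertices, excess $k$, and girth $\approx\tfrac{n}{3k}\log_2k$, which also indicates where each factor in \eqref{eq:ubongirth} comes from.
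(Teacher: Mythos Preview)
The paper does not prove this theorem; it is quoted from Bollob\'as and Szemer\'edi~\cite{BS2002} as a black box in the preliminaries, so there is no proof in the paper to compare your attempt against.

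As for your attempt on its own merits: the reduction to a weighted multigraph $G^{*}$ of minimum degree~$\ge 3$ on $|S|\le 2k$ vertices with total edge length $\le n+k$ is correct and is indeed the skeleton of the Bollob\'as--Szemer\'edi argument. But you have yourself located the gap. The ``doubling estimate'' is asserted, not proved; the derivation of the coefficient~$\tfrac{2}{3}$ is replaced by the sentence ``Book-keeping the constants \dots\ upgrades this to $\tfrac{2(n+k)}{3k}\log_2 k$''; and the term $\log_2\log_2 k + 4$ is disposed of by saying it ``covers the reduction losses''. These three items are precisely the quantitative content of the theorem, and a BFS from a single well-chosen root together with an informal charging of heavy edges against the budget $\sum_e(\ell_e-1)\le n$ does not by itself deliver them --- one needs a genuine argument controlling how the irregular edge lengths interact with the branching, and the $\log_2\log_2 k$ term in~\cite{BS2002} comes from an actual second-order refinement, not from slack absorbed at the end.

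Your ``crude substitute'' paragraph, on the other hand, is essentially a complete proof of the weaker bound $O\!\big(\tfrac{n+k}{k}\log k\big)$, and that weak form is in fact all the present paper ever uses: every invocation of Theorem~\ref{thm:BS} here is for graphs with excess $\Omega(n)$, where only the order of magnitude matters. So what you have written is a sound plan plus a proof of the statement the paper actually needs, but it is not a proof of Theorem~\ref{thm:BS} with the stated constant.
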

Thus if an $n$-vertex graph has excess $\Omega(n)$, then its girth is $O(\log n)$.

Two probabilistic tools we will use are Chernoff's bound and Markov's inequality.
\begin{theorem}[Chernoff]\label{thm:chernoff}
    Let $X$ be a binomial random variable $Bin(n,p)$. For any $\epsilon\in (0,1)$, 
    \[\cP(X\le (1-\epsilon)\cE X ) \le e^{-\epsilon^2\cE X/3}. \]
\end{theorem}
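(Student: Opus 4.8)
The plan is to apply the exponential-moment (Bernstein) method, i.e.\ Markov's inequality to $e^{-tX}$. Write $\mu := \cE X = np$ and decompose $X = \sum_{i=1}^n X_i$ with $X_1,\dots,X_n$ independent Bernoulli$(p)$ variables. For any fixed $t>0$, since $x\mapsto e^{-tx}$ is decreasing, the event $\{X \le (1-\epsilon)\mu\}$ coincides with $\{e^{-tX} \ge e^{-t(1-\epsilon)\mu}\}$, so Markov's inequality yields
\[
\cP\bigl(X \le (1-\epsilon)\mu\bigr) \le e^{t(1-\epsilon)\mu}\,\cE\!\left[e^{-tX}\right].
\]
By independence, $\cE[e^{-tX}] = \prod_{i=1}^n \cE[e^{-tX_i}] = \bigl(1 + p(e^{-t}-1)\bigr)^n$, and $1+x\le e^x$ bounds this by $e^{\mu(e^{-t}-1)}$. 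Hence for every $t>0$,
\[
\cP\bigl(X \le (1-\epsilon)\mu\bigr) \le \exp\!\Bigl(\mu\bigl(e^{-t}-1 + t(1-\epsilon)\bigr)\Bigr).
\]

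Next I would choose $t$ to minimize the exponent. Differentiating $e^{-t}-1+t(1-\epsilon)$ in $t$ and setting it to zero gives $e^{-t}=1-\epsilon$, i.e.\ $t = -\ln(1-\epsilon)$, which is indeed positive since $0<1-\epsilon<1$. Substituting this value,
\[
\cP\bigl(X \le (1-\epsilon)\mu\bigr) \le \exp\!\Bigl(-\mu\bigl(\epsilon + (1-\epsilon)\ln(1-\epsilon)\bigr)\Bigr).
\]

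Finally I would reduce to the scalar inequality $\epsilon + (1-\epsilon)\ln(1-\epsilon) \ge \epsilon^2/2$ on $[0,1)$, which gives even more than the claimed $\epsilon^2/3$. Putting $g(\epsilon) := \epsilon + (1-\epsilon)\ln(1-\epsilon) - \epsilon^2/2$, one has $g(0)=0$ and $g'(\epsilon) = -\ln(1-\epsilon) - \epsilon$, which is nonnegative because $-\ln(1-\epsilon) = \sum_{j\ge 1}\epsilon^j/j \ge \epsilon$; hence $g\ge 0$ throughout $[0,1)$. Plugging this back in gives $\cP(X \le (1-\epsilon)\mu)\le e^{-\epsilon^2\mu/2}\le e^{-\epsilon^2\mu/3}$, as required. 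There is no real obstacle in this argument; the only points needing a little care are keeping $t>0$ so that the direction of the inequality in the Markov step is valid, and verifying the final elementary estimate. (Alternatively, one may simply cite this as the standard Chernoff lower-tail bound, e.g.\ from a textbook treatment of concentration inequalities.)
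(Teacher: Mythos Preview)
Your proof is correct and is the standard exponential-moment derivation of the Chernoff lower-tail bound; each step (Markov applied to $e^{-tX}$, the MGF computation via independence, the optimal choice $t=-\ln(1-\epsilon)$, and the elementary inequality $\epsilon+(1-\epsilon)\ln(1-\epsilon)\ge\epsilon^2/2$) is valid. The paper itself does not prove this statement: it merely records Chernoff's bound as a preliminary tool without proof, exactly the ``cite as standard'' alternative you mention at the end.
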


\begin{theorem}[Markov]\label{thm:markov}
    For any non-negative random variable~$X$ and $t>0$,
    \[\cP(X\ge t) \le \frac{\cE X}{t}.  \]
\end{theorem}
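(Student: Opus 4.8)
The plan is to bound $X$ pointwise from below by a scalar multiple of the indicator of the event $\{X\ge t\}$ and then take expectations. First I would introduce the indicator random variable $Y=\mathbf{1}_{\{X\ge t\}}$, equal to~$1$ on the event $\{X\ge t\}$ and to~$0$ on its complement. The key observation is the pointwise inequality $X\ge tY$: on the event $\{X\ge t\}$ one has $X\ge t=tY$, and on the complementary event $Y=0$, so $X\ge 0=tY$ by the non-negativity of~$X$. Applying monotonicity and linearity of expectation gives $\cE X\ge t\,\cE Y$. Finally, since $Y$ is $\{0,1\}$-valued, $\cE Y=\cP(X\ge t)$, whence $\cE X\ge t\,\cP(X\ge t)$; dividing by $t>0$ yields $\cP(X\ge t)\le \cE X/t$, as claimed.

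There is essentially no obstacle here; the only point that needs care is the pointwise bound $X\ge tY$ in the regime $X<t$, which genuinely uses the hypothesis $X\ge 0$ (a large negative value of $X$ would break it, and indeed the statement is false without non-negativity). An alternative, indicator-free route I could present instead is to split $\cE X=\cE[X\mathbf{1}_{\{X\ge t\}}]+\cE[X\mathbf{1}_{\{X<t\}}]$, discard the second (non-negative) summand, and use $X\ge t$ on the first event to get $\cE X\ge t\,\cP(X\ge t)$. Either way the argument is two or three lines, and I would choose whichever formulation best matches the probabilistic conventions used elsewhere in the note.
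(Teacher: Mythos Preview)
Your argument is the standard proof of Markov's inequality and is entirely correct; both the indicator bound $X\ge t\,\mathbf{1}_{\{X\ge t\}}$ and the splitting variant you mention are valid and equivalent. Note, however, that the paper does not actually supply a proof of this theorem---it is merely quoted as a classical probabilistic tool in the preliminaries---so there is no in-paper argument to compare your approach against.
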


\subsection{Proof of~\refT{thm:main}}
As mentioned, we prove a slightly stronger result compared to~\refT{thm:main}.
\begin{theorem}\label{thm:mainstronger}
For any $\alpha, \beta\ge 0$ with $2\alpha+\beta>1$, there exist $\xi(\alpha,\beta)>0$ and $C(\alpha,\beta)>0$ such that the following holds. Let~$G$ be an $n$-vertex graph with an edge coloring. If the family of color classes $\cF=(F_1,\dots, F_m)$ satisfies the following conditions:
\begin{itemize}
\item each color class is non-empty,
 \item    there exists $\cF_M\subseteq \cF$ such that $|\cF_M|\ge (\alpha-\xi) n$ and each color class in~$\cF_M$ contains a matching of size two,
 \item there exists $\cF_S\subseteq \cF\setminus\cF_M$ such that $|\cF_S|\ge(\beta-\xi) n$ and each color class in~$\cF_S$ contains a star of size two,
 \item and $|\cF\setminus(\cF_M\cup\cF_S)|\ge (1-\alpha-\beta-\xi)n$.
\end{itemize}
Then the rainbow girth of~$G$ is at most $C\log n$.
\end{theorem}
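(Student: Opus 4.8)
The plan is to use vertex sampling to extract from the color classes a rainbow subgraph $H\subseteq G$ on at most $n$ vertices with \emph{linear excess}, i.e.\ $|E(H)|-|V(H)|\ge c'n$ for a constant $c'=c'(\alpha,\beta)>0$; then \refT{thm:BS} forces $H$, and therefore $G$, to contain a cycle of length $O(\log n)$, which is rainbow because $H$ is.

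Fix a constant $p=p(\alpha,\beta)\in(0,1)$ to be chosen below, and let $W\subseteq V(G)$ retain each vertex independently with probability $p$. Given $W$, form $H$ as follows: for each $F_i\in\cF_M$ fix a matching $\{a_ib_i,c_id_i\}\subseteq F_i$ (four distinct vertices) and add $a_ib_i$ to $H$ if $\{a_i,b_i\}\subseteq W$, otherwise add $c_id_i$ if $\{c_i,d_i\}\subseteq W$; for each $F_i\in\cF_S$ fix a star $\{u_iv_i,u_iw_i\}\subseteq F_i$ (with $v_i\neq w_i$) and add $u_iv_i$ if $\{u_i,v_i\}\subseteq W$, otherwise add $u_iw_i$ if $\{u_i,w_i\}\subseteq W$; for every remaining $F_i$ fix any $x_iy_i\in F_i$ and add it if $\{x_i,y_i\}\subseteq W$. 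Distinct color classes are edge-disjoint, so $H$ is a simple rainbow subgraph of $G$, and $V(H)\subseteq W$, so $|E(H)|-|V(H)|\ge|E(H)|-|W|$ in every outcome, whence $\cE[|E(H)|-|V(H)|]\ge \cE|E(H)|-pn$.

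Now $\cE|E(H)|=\sum_i\cP(F_i\text{ contributes an edge})$. A class of $\cF_M$ contributes with probability $\cP(\{a_i,b_i\}\subseteq W\text{ or }\{c_i,d_i\}\subseteq W)=2p^2-p^4$ by inclusion--exclusion; a class of $\cF_S$ contributes with probability $\cP(u_i\in W)\,\cP(v_i\in W\text{ or }w_i\in W)=p(2p-p^2)=2p^2-p^3$; every other non-empty class contributes with probability $p^2$. Plugging in the cardinality hypotheses gives $\cE[|E(H)|-|V(H)|]\ge(h(p)-\xi\,\gamma(p))\,n$, where $h(p):=\alpha(2p^2-p^4)+\beta(2p^2-p^3)+(1-\alpha-\beta)p^2-p$ and $\gamma(p)>0$ is an explicit constant. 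One computes $h(1)=0$ and $h'(1)=1-2\alpha-\beta$, which is \emph{negative} exactly by the hypothesis $2\alpha+\beta>1$; hence $h(p)>0$ for all $p$ just below $1$. Fix such a $p$, then fix $\xi=\xi(\alpha,\beta)>0$ small enough that $h(p)-\xi\gamma(p)\ge h(p)/2=:c'>0$. By averaging, some $W$ yields a rainbow $H$ on at most $n$ vertices with excess at least $c'n$; since $c'n>0$ and the excess is an integer, $H$ always has a cycle, so a rainbow cycle exists, and for $n$ above a constant the excess exceeds $2$. Applying \refT{thm:BS} with $k:=|E(H)|-|V(H)|\in[c'n,\binom{n}{2}]$ then bounds the girth of $H$, hence the rainbow girth of $G$, by $\tfrac{2}{3}(1+\tfrac{1}{c'})(\log_2 k+\log_2\log_2 k+4)\le C\log n$ for a suitable $C=C(\alpha,\beta)$; the boundedly many smaller $n$ (which still admit a rainbow cycle of length at most $n$) are absorbed into $C$.

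The only real decision is the choice of $p$: in contrast to the usual deployment of vertex sampling one must take $p$ \emph{close to} $1$, and the threshold relation $2\alpha+\beta>1$ is precisely what makes $h'(1)<0$, so that a genuinely linear excess can be harvested from the slightly-below-$1$ regime. Everything else is routine: that $p,\xi,c',C$ depend only on $\alpha,\beta$; that additional color classes, and an arbitrarily large total number of colors, only increase $\cE|E(H)|$ while leaving $\cE|W|=pn$ unchanged, so the bound is unaffected; and the trivial small-$n$ bookkeeping.
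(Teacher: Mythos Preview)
Your proof is correct and follows essentially the same approach as the paper: take a $p$-random vertex subset, keep one surviving edge per color class to obtain a rainbow subgraph, compute the expected excess via the three probabilities $2p^2-p^4$, $2p^2-p^3$, $p^2$, and apply \refT{thm:BS}. The only cosmetic difference is that the paper picks explicit values $p=1-\gamma/40$ and $\xi=\gamma t/100$ (with $\gamma=2\alpha+\beta-1$) and expands the polynomial, whereas you argue via $h(1)=0$, $h'(1)=1-2\alpha-\beta<0$ to guarantee a suitable $p$ just below $1$; both routes encode the same observation that the key regime is $p\to 1^-$.
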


\begin{proof}
    Without loss of generality, we may assume that $\max(\alpha,\beta)\le 1$, otherwise for $\xi=\frac{\max(\alpha,\beta)-1}{2}$,  by~\refT{thm:BS} taking one arbitrary edge from each color class already yields logarithmic rainbow girth.

    For some~$p:=1-t\in (0,1)$ to be determined later, let~$S$ be a $p$-random subset of~$V(G)=[n]$, i.e., each vertex of~$G$ is included in~$S$ independently with probability~$p$.
    We construct a subgraph~$H$ of~$G$ in the following way: the vertex set of~$H$ is~$S$, and for each color class of~$G$, we include in~$H$ one arbitrary edge from that class, provided the edge is entirely contained within~$S$. Note that~$H$ constructed in this way is rainbow. Let 
    \[K=|E(H)|-|V(H)|\] 
    be the excess of~$H$. We aim to show that
    \begin{equation}\label{eq:lbE}
        \cE K\ge \delta n
    \end{equation}
    for some $\delta(\alpha, \beta)>0$. This implies that there exists an instance of~$H$ whose excess is at least~$\delta n$, and then by the argument below~\refT{thm:BS}, there exists a cycle in~$H$, which is rainbow in~$G$, of length at most $C\log n$ for some $C(\delta)>0$. This will complete the proof.

To prove~\eqref{eq:lbE}, for a matching of size two in~$G$, by inclusion-exclusion, the probability that one of its edges is contained in~$S$ is $2p^2-p^4$. 
For a star of size two, the probability that one of its edges is contained in~$S$ is $2p^2-p^3=p(1-(1-p)^2)$.
And for a single edge, the probability that it is contained in~$S$ is $p^2$.

Combining these probabilities with the conditions that $|\mathcal{F}_M|\ge (\alpha-\xi)n$, $|\mathcal{F}_S|\ge (\beta -\xi)n$, and $|\mathcal{F}\setminus (\mathcal{F}_M\cup\mathcal{F}_S)|\ge (1-\alpha -\beta -\xi)n$, by linearity of expectation we have
\begin{align*}
    \cE K =& \cE |E(H)| -\cE |V(H)|\\
    \ge& (\alpha-\xi) n (2p^2-p^4)+(\beta-\xi) n(2p^2-p^3)+(1-\alpha-\beta-\xi)n p^2-np.
\end{align*}
Substituting $p=1-t$, it implies that
\begin{align*}
    \cE K \ge & (2\alpha +\beta -1)tn +(-\alpha t^2+4\alpha t -5\alpha +\beta t-2\beta +1 )t^2n\\
     &+ (-3+3 t+4 t^2-5 t^3+ t^4)\xi n.
\end{align*}
Setting $\gamma:=2\alpha+\beta-1>0$, $t:=\frac{\gamma}{40}\le 1$, and $\xi:=\frac{\gamma t}{100}$, it yields
\begin{align*}
    \cE K \ge \gamma tn - 20\cdot t (tn) - 20 \xi n =\gamma t n -\frac{1}{2}\gamma t n -\frac{1}{5}\gamma tn\ge \frac{1}{10}\gamma t n.
\end{align*}
Setting $\delta :=\frac{1}{10}\gamma t$ completes the proof of~\eqref{eq:lbE}.
\end{proof}
\begin{remark}
    The condition~$2\alpha+\beta>1$ in~\refT{thm:mainstronger} (or~\refT{thm:main}) is equivalent to~$\alpha>1-\alpha-\beta$, i.e., the proportion of classes that contain a matching of size two is greater than the proportion consisting of a single edge.
\end{remark}

\subsection{Proof of~\refT{thm:nonstar}}
\begin{proof}[Proof of~\refT{thm:nonstar}]
    If there are at least $\frac{\alpha}{2}n$ color classes of~$G$, each of which contains a matching of size two, then~\refC{cor:matching} implies that the rainbow girth is at most $C_1\log n$ for some constant~$C_1>0$ depending on~$\alpha$. 

Otherwise, there are at least $\frac{\alpha}{2}n$ color classes of~$G$, each of which contains a triangle. We take arbitrarily two edges from a triangle in each of such color classes, and arbitrarily one edge from each of the remaining color classes. Then we get a graph~$F$ which has at least 
\[  \frac{\alpha}{2}n\cdot 2 +(n-\frac{\alpha}{2}n)=(1+\frac{\alpha}{2})n \]
edges on~$n$ vertices. As the excess of~$F$ is at least~$\frac{\alpha}{2}n$,~\refT{thm:BS} implies that there exists a constant $C_2(\alpha)>0$ such that the girth of~$F$ is at most $C_2\log n$. If the shortest cycle in~$F$ is not rainbow, two edges of the same color must come from a monochromatic triangle by the construction of~$F$, then we can replace these two edges by the third edge of the triangle and obtain a shorter cycle in~$G$. Do this replacement repeatedly until we get a rainbow cycle, which is a rainbow cycle in~$G$ of length at most $C_2\log n$.

Taking $C=\max(C_1,C_2)$ completes the proof.
\end{proof}

The following result provides a more explicit upper bound on the rainbow girth in terms of the number of non-star color classes.

\begin{theorem}\label{thm:nonstarex}
There exists $L_0>0$ such that for any $0\le c\le \frac{1}{2}$ and $L\ge L_0$, the following holds. Let~$G$ be an $n$-vertex graph admitting an edge coloring with~$n$ colors such that each color class is of size at least two. If there are at least~$Ln^{1-c}$ color classes, each of which contains either a matching of size two or a triangle, then the rainbow girth of~$G$ is at most $\frac{2\log_2 (\frac{L}{10^2}n^{1-2c})}{\frac{L}{10^2}n^{1-2c}}n$.
\end{theorem}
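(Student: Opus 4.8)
The plan is to rerun the random-sparsification argument behind \refT{thm:mainstronger}, but with the vertex-retention probability calibrated to the number $N:=\lceil Ln^{1-c}\rceil$ of non-star color classes rather than to a constant fraction of~$n$. First I would note that the statement is vacuous unless $Ln^{1-c}\le n$, since~$G$ has only~$n$ color classes; so I may assume $Ln^{-c}\le 1$, which gives $N\le n$ and $L^2n^{-2c}=(Ln^{-c})^2\le 1$, facts I will use throughout.

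Next I would put $t:=\tfrac1{10}Ln^{-c}$, so $0<t\le\tfrac1{10}$, set $p:=1-t\ge\tfrac9{10}$, take $S\subseteq V(G)$ to be a $p$-random subset, and form the rainbow subgraph~$H$ exactly as in \refT{thm:mainstronger}: $V(H)=S$, and for each color class we include in~$H$ one of its edges contained in~$S$ whenever one exists. The elementary inclusion--exclusion estimates I need are: a matching of size two contributes an edge to~$H$ with probability $2p^2-p^4$; a triangle with probability $3p^2-2p^3\ge 2p^2-p^4$ (the difference being $p^2(1-p)^2\ge 0$); and \emph{any} color class of size at least two contributes with probability at least $2p^2-p^3$, because two of its edges form either a matching of size two or a star of size two. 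In particular each of the~$N$ non-star classes contributes with probability at least $2p^2-p^4$ and each of the remaining $n-N$ classes with probability at least $2p^2-p^3$, so by linearity of expectation
\begin{align*}
\cE\bigl(|E(H)|-|V(H)|\bigr)
&\ge N(2p^2-p^4)+(n-N)(2p^2-p^3)-np\\
&= Np^3t-np(1-p)^2 = pt\,(Np^2-nt).
\end{align*}
Since $p\ge\tfrac9{10}$ and $nt=\tfrac1{10}Ln^{1-c}\le\tfrac N{10}$, this is at least $\tfrac9{10}t\cdot\tfrac N2\ge\tfrac{9L^2}{200}\,n^{1-2c}=:K_1$, and one checks $K_1\ge K_0:=\tfrac{L}{10^2}\,n^{1-2c}$ while $K_1=\tfrac9{200}(L^2n^{-2c})\,n\le\tfrac n{22}$.

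From here some outcome of the construction is a rainbow subgraph of~$G$ with excess at least~$K_1$, on at most~$n$ and (since $K_1\ge 2$) at least~$4$ vertices. Because $K_1\le n/22$, the prefactor $\tfrac{2(n+k)}{3k}$ in \refT{thm:BS} is essentially $\tfrac{2n}{3k}$, which leaves ample slack; so taking $L_0$ large enough that $K_1\ge\tfrac9{200}L_0^2$ exceeds the absolute constant past which $\tfrac{2(n+k)}{3k}\bigl(\log_2 k+\log_2\log_2 k+4\bigr)\le\tfrac{2n}{k}\log_2 k$ holds (with $k=\lfloor K_1\rfloor\le n/22$), \refT{thm:BS} produces a cycle in this subgraph of length at most $\tfrac{2\log_2 K_1}{K_1}\,n$. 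Finally, since $x\mapsto\tfrac{2\log_2 x}{x}$ is non-increasing on $[e,\infty)$ and $e\le K_0\le K_1$, this length is at most $\tfrac{2\log_2 K_0}{K_0}\,n$; the cycle is rainbow in~$G$, which is the claimed bound.

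The main work---there is no conceptual obstacle---is the arithmetic: pinning down the fraction $\tfrac1{10}$ in~$t$ so that the expected excess dominates $\tfrac{L}{10^2}n^{1-2c}$ with room to spare, checking $K_1\le n$ and $K_1\ge$(absolute constant) in the two extreme regimes $c\to\tfrac12$ (where $n^{1-2c}$ stays bounded) and $c\to 0$, and simplifying the Bollob\'as--Szemer\'edi expression down to the clean stated form. The one hypothesis that is genuinely indispensable is that every color class has size at least two: a single-edge class contributes only $p^2<p$ to the expectation, which would make $\cE(|E(H)|-|V(H)|)$ negative and destroy the argument.
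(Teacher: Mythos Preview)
Your argument is correct and follows the same random-sparsification-plus-Bollob\'as--Szemer\'edi template as the paper, but you streamline it in one useful way. The paper splits into two cases: if at least $\tfrac{L}{2}n^{1-c}$ classes contain a triangle, it deterministically takes two edges from each such triangle, applies \refT{thm:BS}, and then uses the triangle-replacement trick (swap two monochromatic triangle edges for the third) to make the resulting short cycle rainbow; only in the matching-dominant case does it run the $p$-random construction, there with $t=\tfrac{1}{10}n^{-c}$. You bypass this dichotomy by observing that the probability a triangle contributes an edge to the random subgraph, namely $3p^2-2p^3$, dominates the matching probability $2p^2-p^4$ (the difference being $p^2(1-p)^2\ge 0$), so both flavours of non-star class are handled uniformly in a single random step. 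This eliminates the case split and the replacement argument entirely, at no cost. Your choice $t=\tfrac{1}{10}Ln^{-c}$ (rather than the paper's $\tfrac{1}{10}n^{-c}$) also works and in fact yields the slightly larger expected excess $\tfrac{9L^2}{200}\,n^{1-2c}\ge \tfrac{L}{10^2}\,n^{1-2c}$; either choice feeds into the same clean-up of the Bollob\'as--Szemer\'edi expression to the stated bound.
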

\begin{proof}
First note that there exists a constant $A>0$ such that for all large~$n$ and $A\le k\le n$, the bound in~\eqref{eq:ubongirth} satisfies that
\begin{equation}\label{eq:weaker}
      \frac{2(n+k)}{3k}(\log_2k+\log_2\log_2k+4) \le \frac{2\log_2 k}{k}n  
\end{equation}
and the function $f(k):=\frac{\log_2 k}{k}$ is decreasing for $k\in[A,+\infty)$.

With some forecast, we set \begin{equation}\label{eq:def:L0}
    L_0:=\max(100A, \; 10^3 ).
\end{equation}

If at least $\frac{L}{2}n^{1-c}$ color classes of~$G$ contains a triangle, following the same way as in the proof of~\refT{thm:nonstar} by taking arbitrary two edges from a triangle in each of such color classes and taking one edge from each of the remaining color classes, we obtain a subgraph~$F$ with excess at least~$\frac{L}{2}n^{1-c}$, which by~\refT{thm:BS} and~\eqref{eq:weaker} yields a rainbow cycle in~$G$ of length at most
\[  \frac{2 \log_2 (\frac{L}{2}n^{1-c})}{\frac{L}{2}n^{1-c}}n\le \frac{2\log_2(\frac{L}{10^2}n^{1-2c})}{\frac{L}{10^2}n^{1-2c}},   \]
where the last inequality is by the monotonicity of $f(k)$.

Otherwise, there are~$T$ color classes of~$G$, each of which contains a matching of size two, for some $T\ge \frac{L}{2}n^{1-c}$. Then each of the other~$n-T$ color classes contains a star of size two. Similarly as the proof of~\refT{thm:nonstar}, for some $p:=1-t\in (0,1)$ to be determined later, we take a $p$-random subset~$S$ of~$V(G)$. And for each color class, we arbitrarily take one edge contained entirely in~$S$, if such an edge exists, to form a rainbow subgraph~$H$ with vertex set~$S$. For the excess~$K$ of~$H$, we have
\begin{align*}
    \cE K &= \cE |E(H)|-\cE |V(H)| \\
    &\ge T(2p^2-p^4)+(n-T)(2p^2-p^3)-np\\
    &=\Big(  Tt - 3 T t^2  + 3 T t^3  - T t^4 \Big) +\Big( - n t^2 +n t^3\Big).
\end{align*}
Setting $t:=\frac{1}{10}n^{-c}\le \frac{1}{10}$ and using $T\ge \frac{L}{2}n^{1-c}$, it implies that
\[\cE K \ge \frac{1}{2}tT-nt^2\ge \frac{1}{2}\cdot\frac{1}{10}n^{-c}\cdot \frac{L}{2}n^{1-c}-\frac{1}{10^2}n^{1-2c}\ge \frac{L}{10^2}n^{1-2c}. \]
Therefore there exists an instance~$H$ whose excess is at least $\frac{L}{10^2}n^{1-2c}$. Then by~\eqref{eq:def:L0}, \refT{thm:BS}, and~\eqref{eq:weaker}, it implies that there is a cycle in~$H$ of length at most $\frac{2\log_2 (\frac{L}{10^2}n^{1-2c})}{\frac{L}{10^2}n^{1-2c}}n$, which is rainbow in~$G$.
This completes the proof.
\end{proof}

\subsection{Proof of~\refT{thm:lb}}
For a $t$-graph~$G$ on~$[n]$, the \emph{$k$-shadow} of~$G$ is
\[ \partial_k(G):=\{ S\in \binom{[n]}{k}\mid S\subseteq e \text{ for some $e\in G$} \}.  \]
\begin{proof}[Proof of~\refT{thm:lb}]
For
 \begin{equation*}
        p:= \frac{4\cdot 2^t t! L}{n^{t-1}},
    \end{equation*}
    let~$G_0$ be $G^{(t)}(n,p)$, i.e., each edge of the complete $t$-graph~$K_{n}^{(t)}$ is included in~$G_0$ independently with probability~$p$.
   
The idea to prove the theorem is that by alteration, we shall find some $G\subseteq G_0$ such that $|G|\ge L n$, $|e\cap f|\le 1$ for any distinct $e,f\in G$, and for $H=\partial_k G$ with color classes $( \binom{e}{k} )_{e\in G}$, the minimum size of~$F\in\mathcal{F}$ such that there is a rainbow copy of~$F$ in~$H$ is at least~$c\log n$. Note that by construction, $H=\cup_{e\in G}\binom{e}{k}$, and the condition~$|e\cap f|\le 1<k$ for distinct $e,f\in G$ guarantees that the color classes are disjoint.

    Turn to the details. We assume that~$n$ is large enough in the following. Since 
    \[\cE |G_0|=\binom{n}{t}p \ge \frac{(n-t)^t}{t!}p\ge \frac{(n/2)^t}{t!}p\ge 4Ln,\]
    by Chernoff's bound~\refT{thm:chernoff}, the event
    \[ \mathcal{A}:=\{ |G_0|\ge 3Ln \} \]
    holds with probability at least $\cP(|G_0|\ge 0.9\cE|G_0|)=1-o(1)$.

    Let~$Y$ be the number of pairs $(e,f)$ such that $e,f\in G_0$ are distinct and $|e\cap f|\ge 2$. Since there are at most~$\binom{n}{t}$ ways to choose a $t$-set~$e$, at most $\binom{t}{2}$ ways to determine two vertices in the intersection, and at most~$\binom{n-2}{t-2}$ ways to extend the two vertices to a~$t$-vertex set~$f$, we have
    \[ \cE Y \le \binom{n}{t}\binom{t}{2}\binom{n-2}{t-2}p^2=o(n). \]
    Thus Markov's inequality~\refT{thm:markov} implies that the event
    \[ \mathcal{B}:=\{ Y\le Ln \} \]
holds with probability $1-o(1)$.

For a copy~$C=\{S_1,\dots,S_\ell\}$ of a $k$-graph~$F$ of size~$\ell$ on~$[n]$ and a $t$-graph~$T$ on~$[n]$,~$C$ is called \emph{distinguishable in~$T$} if there exists $e_1,\dots, e_\ell\in T$ such that $S_i\subseteq e_i$ for each $1\le i\le \ell$, and $S_j\not\subseteq e_i$ for $j\neq i$. To bound the probability of~$C$ being distinguishable in~$G_0$, for each $1\le i\le \ell$ let $\mathcal{T}_{S_i}$ be the event that there exists $e_i\in G_0$ such that $S_i\subseteq e_i$ and $S_j\not\subseteq e_i$ for all $j<i$. Then
\begin{align*}
    \cP(\text{$C$ is distinguishable in~$G_0$})=\cP(\cap_{i=1}^\ell \mathcal{T}_{S_i})&=\prod_{i=1}^\ell \cP(\mathcal{T}_{S_i}\mid\cap_{j=1}^{i-1}\mathcal{T}_{S_j}).
\end{align*}
Since there are at most $\binom{n-k}{t-k}$ many~$t$-sets~$e$ satisfying $S_i\subseteq e$ and $S_j\not\subseteq e$ for all~$j<i$,
\[\cP(\neg\mathcal{T}_{S_i}\mid\cap_{j=1}^{i-1}\mathcal{T}_{S_j}) \ge (1-p)^{\binom{n-k}{t-k}}\ge 1-p\binom{n-k}{t-k},    \]
where the last inequality is by Bernoulli's inequality $(1-p)^N\ge 1-pN$ for $p\in(0,1)$ and positive integer~$N$.
Therefore 
\[\cP(\text{$C$ is distinguishable in~$G_0$})\le \Big( p\binom{n-k}{t-k}  \Big)^\ell\le p^\ell n^{\ell(t-k)}.  \]
Let~$X_\ell$ be the number of distinguishable copies on~$[n]$ of some~$k$-graph in~$\mathcal{F}$ of size~$\ell$. Given $F\in \mathcal{F}$ of size~$\ell$, by the density assumption of~$F$, we have $|V(F)|\le \ell(k-1)$, so the number of copies of~$F$ on~$[n]$ is at most~$n^{\ell(k-1)}$. Because there are at most~$n^{1-\delta}$ many~$k$-graphs of size~$\ell$ in~$\mathcal{F}$,
\[ \cE X_\ell \le n^{1-\delta}\cdot n^{\ell(k-1)} p^\ell n^{\ell(t-k)}=n^{1-\delta}(4\cdot 2^t t!L  )^\ell. \]
By choosing $c(L,t,\delta)>0$ small enough (say $c\log (4\cdot 2^t t!L) \le \delta/9$), for $2\le \ell\le c\log n$, we have
\[ \cE X_\ell =o(n^{1-\frac{2\delta}{3}}).  \]
Thus by Markov's inequality, 
\[ \cP(X_\ell\ge n^{1-\frac{\delta}{3}})=o(n^{-\frac{\delta}{3}}). \]
Taking a union bound over all $2\le \ell\le c\log n$, the event
\[\mathcal{C}:=\{ \sum_{2\le \ell\le c\log n}X_\ell \le Ln \} \]
holds with probability $1-o(1)$.
Summing up, we have
\[\cP(\mathcal{A}\cap\mathcal{B}\cap\mathcal{C})=1-o(1).\]
Take an instance~$G_0$ such that the event~$\mathcal{A}\cap\mathcal{B}\cap\mathcal{C}$ holds. The event~$\mathcal{A}$ ensures that $|G_0|\ge 3Ln$. If there two edges of~$G_0$ intersecting at more than one vertex, we remove one of the edges. Then event~$\mathcal{B}$ guarantees that we remove at most~$Ln$ edges. Let the remaining $t$-graph be~$G_1$. If for some $2\le \ell\le c\log n$, there is a copy $\{S_1,\dots, S_\ell\}$ on~$[n]$ of some~$F\in\mathcal{F}$ of size~$\ell$  and $e_1,\dots, e_\ell\in G_1$ such that $S_i\subseteq e_i$ for each $1\le i\le \ell$ and $S_j\not\subseteq e_i$ for $j\neq i$, we remove arbitrary one edge from $e_1,\dots, e_\ell$. Then event~$\mathcal{C}$ guarantees that in this step we remove at most~$Ln$ edges from~$G_1$. Let the resulting $t$-graph be~$G$ and let~$H:=\partial_k(G)$. Since $G\subseteq G_1$, $|e\cap f|\le 1$ for any distinct $e,f\in G$. And there is no distinguishable copy of~$F$ in~$G$ for $F\in\mathcal{F}$ with $|F| \le c\log n$. 

Then~$H$ satisfies the conclusion of the theorem: let the color classes of~$H$ be~$(\binom{e}{k})_{e\in G}$, so there are $|G|\ge |G_0|-Ln-Ln\ge Ln $ many color classes and each class is a copy of~$K^{(k)}_t$. As discussed, the classes are disjoint and their union is~$H$. Furthermore, if there is a rainbow copy of~$F$ in~$H$ for some $F\in\mathcal{F}$, then the copy is distinguishable in~$G$, which implies that~$|F| >c \log n$. 
\end{proof}

\subsection{Tightness of the condition $2\alpha+\beta >1$ in~\refT{thm:main}}
To get logarithmic rainbow girth in~\refT{thm:main}, it is necessary to assume that $2\alpha+\beta >1$.  For $\alpha,\beta\ge 0$ with $2\alpha+\beta=1$, the following is an example of an $n$-vertex graph satisfying all the other conditions in~\refT{thm:main}, whose rainbow girth is linear in~$n$.
\begin{example}\label{ex:abtight}
When $\alpha =0$ or~$\beta =0$, the tightness has been shown by the example below~\refCon{conj:A}, or the example mentioned below~\refT{thm:matchings}, respectively.

    For $\min(\alpha,\beta)>0$ with $2\alpha +\beta =1$, 
    we assume that the integer~$\alpha n$ is at least four and is divisible by two. We shall construct an $n$-vertex~$G$ and an edge coloring with~$n$ colors, such that $\alpha n$ color classes are matchings of size two, $\beta n$ color classes are stars of size two, and $(1-\alpha -\beta )n=\alpha n$ color classes are of size one. Furthermore the rainbow girth of~$G$ is  at least $\min( \alpha n, \frac{\beta}{2}n )$, which is linear in~$n$.

The graph has two connected components on $X=\{x_1,\dots, x_{2\alpha n} \}$ and $Y=\{y_1,\dots, y_{\beta n} \}$, respectively. The~$\alpha n$ many edge sets $\{x_{4i+1}x_{4i+2}, x_{4i+3}x_{4i+4}\}$ and $\{x_{4i+2}x_{4i+5},x_{4i+4}x_{4i+7} \}$ for $i=1,\dots, \frac{\alpha n}{2}$ are color classes that are matchings of size two, where the indices are taken modulo $2\alpha n$. The~$\alpha n$ many edge sets $\{x_{4i+1}x_{4i+3} \}$ and $\{x_{4i+2}x_{4i+4} \}$  for $i=1,\dots, \frac{\alpha n}{2}$ are color classes consisting of a single edge, where the indices are taken modulo $2\alpha n$. The $\beta n$ many edge sets $\{y_iy_{i+1},y_iy_{i+2} \}$ are color classes that are stars of size two, where the indices are taken modulo $\beta n$.
Then a rainbow cycle of~$G$ is either in~$X$ or in~$Y$, which has length at least  $\min( \alpha n, \frac{\beta}{2}n )$.
\end{example}
\small
\bibliographystyle{abbrv}

\begin{thebibliography}{10}

\bibitem{ABCGZ2023}
R.~Aharoni, E.~Berger, M.~Chudnovsky, H.~Guo, and S.~Zerbib.
\newblock Nonuniform degrees and rainbow versions of the Caccetta–H\"aggkvist
  conjecture.
\newblock {\em SIAM J. Discrete Math.} {\bf 37} (2023), 1704--1714.

\bibitem{aharoniconjecture}
R.~Aharoni, M.~DeVos, and R.~Holzman.
\newblock Rainbow triangles and the Caccetta-H\"aggkvist conjecture.
\newblock {\em J. Graph Theory} {\bf 92} (2019), 347--360.

\bibitem{AG2023}
R.~Aharoni and H.~Guo.
\newblock Rainbow cycles for families of matchings.
\newblock {\em Israel J. Math.} {\bf 256} (2023), 1--8.

\bibitem{BS2002}
B.~Bollob\'as and E.~Szemer\'edi.
\newblock Girth of sparse graphs.
\newblock {\em J. Graph Theory} {\bf 39} (2002), 194--200.

\bibitem{CaccettaHaggkvist}
L.~Caccetta and R.~H\"{a}ggkvist.
\newblock On minimal digraphs with given girth.
\newblock {\em Congress. Numer.} {\bf 21} (1978), 181--187.


\bibitem{Clinch2024}
K.~Clinch, J.~Goerner, T.~Huynh, and F.~Illingworth.
\newblock {\em Notes on Aharoni's rainbow cycle conjecture}, 
\newblock 2021-2022 MATRIX Annals. MATRIX Book Series, vol 5. Springer, Cham (2024), 187--202.

\bibitem{DeVos}
M.~DeVos, M.~Drescher, D.~Funk, S.~González Hermosillo de~la Maza, K.~Guo,
  T.~Huynh, B.~Mohar, and A.~Montejano.
\newblock Short rainbow cycles in graphs and matroids.
\newblock {\em J. Graph Theory} {\bf 96} (2021), 192--202.

\bibitem{G2025}
H.~Guo.
\newblock Short rainbow cycles for families of matchings and triangles.
\newblock {\em J. Graph Theory} {\bf 108} (2025), 325--336.

\bibitem{HOMPE202580}
P.~Hompe and T.~Huynh.
\newblock Aharoni's rainbow cycle conjecture holds up to an additive constant.
\newblock {\em J. Combin. Theory Ser. B} {\bf 172} (2025), 80--93.

\bibitem{HS}
P.~Hompe and S.~Spirkl.
\newblock Further approximations for {A}haroni's rainbow generalization of the
  {C}accetta-{H}\"aggkvist conjecture.
\newblock {\em Electron. J. Combin.} {\bf 29} (2022), Paper No. 1.55.

\bibitem{SHEN2000167}
J.~Shen.
\newblock On the girth of digraphs.
\newblock {\em Discrete Math.} {\bf 211} (2000), 167--181.

\bibitem{shen2002caccetta}
J.~Shen.
\newblock On the Caccetta--H{\"a}ggkvist conjecture.
\newblock {\em Graphs Combin.} {\bf 18} (2002), 645--654.

\bibitem{sullivan2006summary}
B.~D. Sullivan.
\newblock A summary of problems and results related to the Caccetta-H\"aggkvist
  conjecture.
\newblock Preprint (2006), \texttt{arXiv math/0605646}.

\end{thebibliography}

\normalsize

\end{document}